\newcommand{\remove}[1]{}
\def\Tr{\mathrm{tr}}
\def\E{\mathbb{E}}
\def \R{\mathbb{R}}
\def \Ecal {\mathcal{E}}
\def \Fcal {\mathcal{F}}
\def \Gcal {\mathcal{G}}
\def \Lcal {\mathcal{L}}
\def \Ncal {\mathcal{N}}
\def \Scal {\mathcal{S}}
\def \Vcal{[n]}
\def \e {E}
\def \diam {\mathrm{diam}({\mathcal{G}})}
\def \one {\mathbf{1}}
\def \rootl {\sqrt{\lambda}}
\def \emax {e_{\max}}
\def \Almg {A_{\textrm{lmg}}}
\def \Agossip {B}
\newtheorem{theorem}{Theorem}
\newtheorem{assumption}{Assumption}
\newtheorem{corollary}{Corollary}
\newtheorem{definition}{Definition}
\newtheorem{lemma}{Lemma}
\newtheorem{proposition}[theorem]{Proposition}
\newtheorem{remark}{Remark}
\renewcommand{\vec}[1]{\mathbf{#1}}
\newcommand{\sets}[1]{\mathcal{#1}}
\newcommand\inp[2]{\langle #1, #2 \rangle}
\def \vecw {\vec{w}}
\def \vecx {\vec{x}}
\def \bfx {\vec{x}}
\definecolor{OliveGreen}{rgb}{0,0.5,0}
\newcommand{\av}{\textcolor{black}}
\author{Ashwin Verma, Marcos M. Vasconcelos, Urbashi Mitra, and Behrouz Touri\footnote{
A.\ Verma and B.\ Touri are with the Department of Electrical and Computer Engineering, University of California San Diego 
     (email: a1verma@eng.ucsd.edu, btouri@eng.ucsd.edu), M.\ M.\ Vasconcelos is with the Commonwealth Cyber Initiative and the Department of Electrical and Computer Engineering, Virginia Tech (email: marcosv@vt.edu), and 
U. Mitra is with the Ming Hsieh Department of Electrical  Engineering, University of Southern California (email: ubli@usc.edu).  U. Mitra was supported in part by the following agencies: ONR under grant N00014- 15-1-2550, NSF under grants CNS-1213128, CCF-1718560, CCF-1410009, CPS-1446901 and AFOSR under grant FA9550-12-1-0215. M. M. Vasconcelos was supported by funding  from the Commonwealth Cyber Initiative (CCI). }}
\title{Maximal Dissent: a {State-Dependent} Way to Agree in Distributed Convex Optimization}
\date{}
\begin{document}

\maketitle

\begin{abstract}
Consider a set of agents collaboratively solving a distributed convex optimization problem, asynchronously, under stringent communication constraints. In such situations, when an agent is activated and is allowed to communicate with only one of its neighbors, we would like to pick the one holding the most informative local estimate. We propose new algorithms where the agents with maximal dissent average their estimates, leading to an information mixing mechanism that often displays faster convergence to an optimal solution compared to randomized gossip.
The core idea is that when two neighboring agents whose distance between local estimates is the largest among all neighboring agents in the network average their states, it leads to the largest possible immediate reduction of the quadratic Lyapunov function used to establish convergence to the set of optimal solutions.
As a broader contribution, we prove the convergence of max-dissent subgradient methods using a unified framework that can be used for other state-dependent distributed optimization algorithms. Our proof technique bypasses the need of establishing the information flow between any two agents within a time interval of uniform length by intelligently studying convergence properties of the Lyapunov function used in our analysis. 
\end{abstract}

\section{Introduction}

In distributed convex optimization, a collection of agents collaborate to minimize the sum of local objective functions by exchanging information over a communication network.
The primary goal is to design algorithms that converge to an optimal solution via local interactions dictated by the underlying communication network. 
A standard strategy to solving distributed optimization problems consists of each agent first combining the local estimates shared by its neighbors followed by a first-order subgradient method on its local objective function \cite{Tsitsiklis:1986,nedic2009distributed,nedic2010constrained}. Of particular relevance herein are the so-called {\em gossip} algorithms \cite{shah2009gossip}, where the information mixing step consists of averaging the sates of two agents connected by one of the edges selected from the network graph.

Two benefits of gossip algorithms are their simple asynchronous implementation and a reduction in communication costs. One common gossip algorithm is \textit{randomized}, in which an agent is randomly activated and chooses one of its neighbors randomly to average its state \cite{boyd2006randomized,ram2009asynchronous, Aysal:2009}. The randomization mechanism used in this gossip scheme is usually \textit{state-independent}\textbf{}. We consider a different approach to gossip in which the agent chooses one of its neighbors based on its state. At one extreme, we may think of agents who prefer to gossip with neighbors with similar ``opinions''. As in an \textit{echo-chamber}, where agents may only talk to others if they reinforce their own opinions, it does not lead to an effective information mixing mechanism. At the opposite extreme, we consider agents who prefer to gossip with neighbors with maximal disagreement or dissent. In this paper, we focus on the concept of \textit{max-dissent} gossip as a state-dependent information mixing mechanism for distributed optimization. We establish the convergence of the resulting distributed subgradient method under minimal assumptions on the underlying communication graph, and the local functions. 

The idea of enabling a consensus protocol to use state-dependent matrices dates back to the Hegselmann and Krause~\cite{hegselmann2002opinion} model for opinion dynamics. However, the literature on state-dependent averaging in distributed optimization is scarce and mostly motivated by applications in which the state represents the physical location of mobile agents (e.g. robots, autonomous vehicles, drones, etc.). In such settings, the state-dependency arises from the fact that agents that are physically closer have a higher probability of successfully communicating with each other \cite{lobel2011distributed,alaviani2021distributed, Alaviani:2021b}. Existing results assume that the local interactions between agents lead to strong connectivity over time. Unlike previous work, our model does not assume that the state of an agent represents its the position in space. Moreover, we do not impose strong assumptions on the network's connectivity over time such as in  \cite{nedic2010constrained} and \cite{lobel2011distributed}.

Our work is closely related to state-dependent \textit{averaging} schemes known as \textit{Load-Balancing} \cite{cybenko1989dynamic} and \textit{Greedy Gossip with Eavesdropping} \cite{ustebay2010greedy}. The main idea in these methods is to accelerate averaging by utilizing the information from the most \textit{informative} neighbor, i.e., the neighbors whose states are maximally different with respect to some norm from each agent. We refer to it as the \textit{maximal dissent} heuristics. The challenges of convergence analysis for maximal dissent averaging are highlighted in~\cite{cybenko1989dynamic,nedic2009distributedaveraging,ustebay2010greedy}. However, concepts akin to max-dissent have only been explored for the specific problem of averaging \cite{ustebay2010greedy}. Our work, on the other hand, focuses on distributed convex optimization, whose convergence is not guaranteed by the convergence of the averaging scheme alone. 

As a broader impact of the results herein, we show that schemes that incorporate mixing of information between max-dissent agents will converge to a global optimizer of the underlying distributed optimization problem almost surely. Our result enables us to propose and extend the use of load-balancing, and max-dissent gossip  %(Local Max-Gossip) 
to distributed optimization. The key property of max-dissent averaging is that it leads to a contraction of the Lyapunov function used to establishing convergence. While recent work has considered similar contraction results (e.g. \cite{koloskova2019decentralized, koloskova2020unified}), they are not applicable to state-dependent schemes, and do not establish almost sure convergence, but only convergence in expectation.

\subsection{Contributions}

A preliminary version of some of the ideas herein has previously appeared in \cite{Verma:2021}, which addressed only one of the schemes, namely \textit{Global Max-Gossip} for distributed optimization of univariate functions. The results reported here are much more general than the ones in \cite{Verma:2021} addressing $d$-dimensional optimization, and covering multiple state-dependent schemes (e.g. Load-Balancing, Global and Local-Max Gossip) in which the max-dissent agents communicate with non-zero probability at each time. The main contributions of this paper are:
\begin{itemize}
    \item We present state-dependent distributed optimization schemes that do not rely on or imply explicit strong connectivity conditions (such as $B$-connectivity).
     \item We characterize a general result highlighting the importance of max-dissent agents on a graph for distributed optimization, significantly simplifying the task of establishing contraction results for a large class of consensus-based subgradient methods.
    \item We prove the convergence of state-dependent algorithms to a global optimizer for distributed optimization problems using a technique involving the aforementioned contraction property of a quadratic Lyapunov function.
      \item We present numerical experiments that suggest that the proposed state-dependent subgradient methods improve the convergence rate of distributed estimation problems relative to conventional (state-independent) gossip algorithms.
\end{itemize}

\subsection{Organization}

The rest of the paper is organized as follows. First, we formulate distributed optimization problems, and outline a generic state-dependent distributed subgradient method in Section~\ref{sec:problem}. In Section~\ref{sec:consensus} we introduce Local and Global Max-Gossip, and review Randomized Gossip and Load Balancing distributed averaging schemes. We discuss the role of maximal dissent agents and their selection in averaging algorithms in Section~\ref{sec:maxedge}. In Section~\ref{sec:conv}, we present our main results on the convergence of maximal dissent state-dependent distributed subgradient methods. We provide a numerical example that shows the benefit of using algorithms based on the maximal dissent averaging in Section~\ref{sec:numerical}. We conclude the paper in Section~\ref{sec:conclude} where we outline future research directions.

\subsection{Notation}
For a positive integer $n$, we define $[n]=\{1,2,\dots,n\}$. We denote the $d$-dimensional Euclidean space by $\R^d$. 
We use boldface letters such as $\mathbf{x}$ to represent vectors, and lower-case letters such as $x$ to represent scalars. Upper-case letters such as $A$  represent matrices. We use $A^T$ to denote the transpose of a matrix $A$. For $i\in [n]$,  we denote by $\vec{b}_i$ the $i$-th standard basis vector of $\R^n$. We denote by $\one$, the vector with all components equal to one, whose dimension will be clear from the context.
For a vector $\vec{v}$, we denote the $\ell_2$-norm by $\|\vec{v}\|$, and the average of its entries by $\bar{v}$. %= \frac{\one^T \vec{v}}{n}$. 
We say that an $n\times n$ matrix $A$ is stochastic if it is non-negative and the elements in each of its rows add up to one. We say that $A$ is doubly stochastic if both $A$ and $A^T$ are stochastic. 
For two vectors $\vec{a}, \vec{b} \in \mathbb{R}^n$, we define $\inp{\vec{a}}{\vec{b}} = \vec{a}^T\vec{b}$. The trace of a square matrix $A$ is defined to be the sum of entries on the main diagonal of $A$ and is denoted by $\Tr(A)$. For matrices $A,B \in \R^{n \times m}$ we define $\inp{A}{B} =\Tr(A^TB)$ as the inner product and $\|A\|_F$ to denote the Frobenius norm of $A$. 
\section{Problem Formulation}\label{sec:problem}
Consider distributed system with $n$ agents with an underlying communication network defined by a graph $\Gcal = (\Vcal, \Ecal)$. Each agent $i \in \Vcal$ has access to a {\em local} convex function $f_i:\R^d \to \R$. 
The agents can communicate only with their one-hop neighbours as dictated by the network graph $\Gcal$. Our goal is to design a distributed algorithm to solve the following unconstrained optimization problem
\begin{align}\label{eq:def_DistOpt_Problem}
 F^* =    \min_{\vecw\in\R^d}  F(\vecw), \;\; \mbox{where} \;\;  F(\vecw) \triangleq \sum_{i=1}^n f_i(\vecw).
\end{align}

We assume that the local objective function $f_i$ is known only to node $i$ and the nodes can only communicate by exchanging information about their local estimates of the optimal solution.

The solution set of the problem is defined as 
\begin{equation}
\sets{W}^* \triangleq \arg \min_{\vecw \in \R^d} F(\vecw). \nonumber
\end{equation}

Throughout the paper, we  make extensive use of the notion of the \textit{subgradient} of a function. 
\begin{definition}[Subgradient]\label{def:subgradient}
A subgradient of a convex function $f:\R^d \to \R$ at a point $\vecw_0\in \R^d$ is a vector $\vec{g} \in \R^d$ such that
\begin{align}
    f(\vecw_0) + \inp{\vec{g}}{\vecw-\vecw_0}  \leq f(\vecw)\nonumber 
\end{align}
for all $\vecw\in \R^d$. We denote the set of all subgradients of $f$ at $\vecw_0$ by $\partial f(\vecw_0)$, which is called the \textit{subdifferential} of $f$ at $\vecw_0$.
\end{definition}

We make the following assumptions on the structure of the optimization problem in \cref{eq:def_DistOpt_Problem}.

\begin{assumption}[Non-empty solution set]\label{asmpn:Nonempty_solution}
The optimal solution set is non-empty, i.e.,
$
     \sets{W}^* \neq \emptyset.
$
\end{assumption}

\begin{assumption}[Bounded Subgradients]\label{asmpn:bounded_subgrad}
Each local objective function $f_i$'s subgradients are uniformly bounded. In other words, for each $ i \in [n] $, there exists a finite constant $L_i$ such that for all $ \vecw\in \R^d $, we have 
 $\|\vec{g}\| \leq L_i, \ \vec{g} \in \partial f_i(\vecw).$
\end{assumption}

There exist many algorithms to solve the problem in \cref{eq:def_DistOpt_Problem}. Nedic and Ozdaglar~\cite{nedic2009distributed} introduced one of the pioneering schemes, in which %In~\cite{nedic2009distributed},
each agent keeps an estimate of the optimal solution 
%to problem \eqref{eq:def_DistOpt_Problem},
and at each time step, the agents share their local estimate with their neighbors. Then, each agent updates its estimate using a time-varying, \textit{state-independent} convex combination of the information received from their neighbors and its own local estimate. For $t\geq 0$, let $a_{ij}(t)$ denote the coefficients of the aforementioned convex combination at time $t$ such that $a_{ij}(t)\geq 0$, for all $i,j \in [n]$, $a_{ij}(t)=0$ if $\{i,j\} \notin \mathcal{E}$, and $\sum_{j=1}^n a_{ij}(t)=1$, for all $i\in [n]$.  Let $\mathbf{x}_i(t)$ denote the $i$-th agent's estimate of the optimal solution at time $t$. 
The convex combination is followed by taking a step in the direction of any subgradient in the subdifferential at the local estimate, i.e.,
\begin{align}\label{eq:oldupdateSubgrad}
    \vecx_i(t+1) &=& \sum_{j=1}^n a_{ij}(t)\vecx_j(t) - \alpha(t)\vec{g}_i(t), 
\end{align}
where $\vec{g}_i(t)\in \partial f_i\big(\vecx_i(t)\big)$, %for all $t \geq 0$
and $\alpha(t)$ is a step-size sequence.

Herein, we generalize the algorithm in~\cite{nedic2009distributed} by allowing the weights in the convex combination to be \textit{state-dependent} in addition to being time-varying.
Let each agent $i \in \Vcal$ initialize its estimate at an arbitrary point ${\vecx_i(0)\in \R^d}$, which is updated at discrete-time iterations $t\geq 0$ based on its own subgradient and the estimates received from neighboring agents as follows
\begin{align}
    \vecw_i(t+1) &= \sum_{j=1}^n a_{ij}\big(t,\vecx_1(t),\vecx_2(t),\ldots,\vecx_n(t)\big)\vecx_j(t), \nonumber\\ %\label{eqn:averagingdisopt}\\ 
    \vecx_i(t+1) &= \vecw_i(t+1) - \alpha(t+1)\vec{g}_i(t+1), \nonumber
\end{align}
where $a_{ij}\big(t,\vecx_1(t),\vecx_2(t),\dots,\vecx_n(t)\big)$ are non-negative weights, $\alpha(t)$ is a step-size sequence, and
$\vec{g}_i(t)\in \partial f_i\big(\vecw_i(t)\big)$ for all $t \geq 0$. We can express this update rule compactly in matrix form as 
\begin{align}
    W(t+1) &= A\big(t,X(t)\big)X(t),  \label{eq:update_mat_1} \\\nonumber
    X(t+1) &= W(t+1) - \alpha(t+1)G(t+1),
\end{align}
where 
\begin{equation}
    A\big(t,X(t)\big) \triangleq \Big[a_{ij}\big(t,\vecx_1(t),\dots,\vecx_n(t)\big)\Big]_{i,j\in [n]}\nonumber
\end{equation}
and 
\begin{equation}
    X(t) \triangleq \begin{bmatrix}
     \vecx^T_1(t) \\
    \vdots \\
     \vecx^T_n(t) 
    \end{bmatrix}, \ 
    W(t) \triangleq \begin{bmatrix}
     \vecw^T_1(t) \\
    \vdots \\
    \vecw^T_n(t)
    \end{bmatrix}, \ 
     G(t) \triangleq \begin{bmatrix}
    \vec{g}^T_1(t)\\
    \vdots \\
    \vec{g}^T_n(t)
    \end{bmatrix}.  \nonumber
\end{equation}
Note that another difference between Eqs. \eqref{eq:update_mat_1}  and \eqref{eq:oldupdateSubgrad} is that agent $i$ computes the subgradient for the local function $f_i$ at the computed average $\vecw_i(t+1)$ instead of
$\vecx_i(t)$, $t\geq 0$.

\begin{assumption}[Diminishing step-size]\label{asmpn:Step-Size}
The step-sizes $\alpha(t)>0$ form a non-increasing sequence that satisfies
\begin{equation}
   \sum_{t=1}^\infty\alpha(t) = \infty \ \ \text{and} \ \ \sum_{t=1}^\infty\alpha^2(t) < \infty. \nonumber
\end{equation}
\end{assumption}
For a  step-size sequence that satisfies \cref{asmpn:Step-Size},
if the sequence of matrices $\{A(t)\}$, where $A(t)=[a_{ij}(t)]_{i,j\in [n]}$, is doubly stochastic and sufficiently mixing, and the objective functions satisfy the regularity conditions in Assumptions  \ref{asmpn:Nonempty_solution} and \ref{asmpn:bounded_subgrad}, 
then the iterates in Eq.~\eqref{eq:oldupdateSubgrad} converge to an optimal solution irrespective of the initial conditions $\vecx_i(0)\in \R^d$, i.e., $
    \lim_{t\to\infty}\vecx_i(t)=\vecx^*, \  i\in [n],
$
where $\vecx^*\in \sets{W}^*$ \cite[Propositions 4 and 5]{nedic2010constrained}. Our goal for the remainder of the paper is to establish a similar result for state-dependent maximal dissent distributed subgradient methods.

\section{State-dependent average-consensus}\label{sec:consensus}
 In this section, we discuss three state-dependent average-consensus schemes that can potentially accelerate the existing distributed optimization methods, {in so doing, we endeavor to unify the state-dependent average-consensus methodology.} The first scheme, Local Max-Gossip, was studied in~\cite{ustebay2010greedy} {exclusively} for the average consensus problem. We provide two novel averaging schemes, the Max-Gossip and Load-Balancing averaging schemes, that provide faster convergence. The dynamics of these algorithms can be understood as the instances of Eq.~\eqref{eq:update_mat_1} with constant local cost functions $f_i(\bfx)\equiv c$, $i\in [n]$, i.e., 
\begin{align}%\label{eq:stateconsensus}
    X(t+1)=A\big(t,X(t)\big)X(t).\nonumber
\end{align}

We will consider three (two asynchronous and one synchronous) algorithms. The first two algorithms are related to the well-known randomized gossip algorithm \cite{boyd2006randomized,shah2009gossip}. First, we present a brief description of Randomized Gossip.

\subsection{Randomized Gossip}\label{sec:RG}

 Consider a network $\Gcal=([n],\Ecal)$ of $n$ agents, where each agent has an initial estimate %(scalar or vector) 
 $\bfx_i(0)$. At each iteration $t\geq 0$, a  node $i$ is chosen uniformly from $[n]$, independently of the earlier realizations. Then, $i$ chooses one of its neighbors $j\in \Ncal_{i}$, where $\mathcal{N}_{i} \triangleq \{j\in [n]: \{i,j\} \in \Ecal\}$, with probability $P_{ij}>0$. The two nodes exchange their current states $\bfx_{i}(t)$ and $\bfx_{j}(t)$, and update their states according to 
\begin{equation}\label{eq:update}
    \bfx_{i}(t+1)=\bfx_{j}(t+1)=\frac{1}{2}\big(\bfx_{i}(t)+\bfx_{j}(t)\big).
\end{equation}
The states of the remaining agents are unchanged. The update rule in \cref{eq:update} admits a more compact matrix representation as 
\begin{equation}\label{eqn:gossip}
    X(t+1)=\Agossip(e)X(t),
\end{equation}
where $e=\{i,j\}$, and 
\begin{equation}
\Agossip(e)\triangleq I - \frac{1}{2}(\vec{b}_{i}-\vec{b}_{j})(\vec{b}_{i}-\vec{b}_{j})^T. \label{eq:matrix_form_1}
\end{equation}
It is necessary that $\sum_{\ell=1}^nP_{i\ell}=1$ for all $i$, where $P_{i\ell}=0$ if and only if  $\{i,\ell\}\not \in\Ecal$. The dynamical system described in~\cref{eqn:gossip} and its convergence rate are  studied in~\cite{boyd2006randomized}.

\subsection{Global Max-Gossip}
The standard gossiping algorithm described above is state-independent in the sense that the selection of the \textit{gossiping edge} $e$ does not depend on the states at the agents at any time.  Herein, we propose \textit{Global Max-Gossip} where we select
the edge connecting the agents with the largest possible  \textit{dissent} (disagreement) among all edges in the graph $\Gcal = (\Vcal, \Ecal)$, i.e.,
\begin{equation}
    \emax(\Gcal, X) =  \arg \max_{\{i,j\}\in\Ecal}\|\vecx_i-\vecx_j\|. \label{eq:emax}
\end{equation}
In case there are multiple solutions to \cref{eq:emax}, we select the smallest pair of indices $(i^*,j^*)$ based on the lexicographical order, without loss of optimality. For brevity, we use $\emax(X)$ to denote the \textit{max-edge}. 
%for any discussion when the graph is static. 

%We will include $\Gcal_t$ in the notation for any discussion regarding time-varying communication graphs.   

%In the case there are multiple edges in the set of solutions to \eqref{eq:emax}, any choice (deterministic or random) of elements in $\emax(X)$ would be an equally valid choice. 
%In local max-gossip algorithm, we are faithful to the general structure of the gossip algorithm: at each iteration a node is selected at random, and then it chooses the maximum different local neighbor to gossip with. 

\textit{Global Max-Gossip} serves as a benchmark as to what is achievable via state-dependent averaging schemes.  Global Max-Gossip requires an oracle to provide the edge resulting in the largest possible Lyapunov function reduction across all network edges.  %We shall see that Global Max-Gossip offers strong improvement over its local version in terms of performance.
Obtaining a decentralized algorithm to determine the max-dissent edge is a challenging open problem beyond the scope of this paper.% and left for future work.
%The resulting algorithm is called \textit{Max-Gossip}, and can be interpreted as an if oracle that informs us about which edge to select such that leads to the largest possible immediate reduction in the Lyapunov function associated with the variance of the states across the \textit{entire} network. Clearly, Global-Max improves upon its local counterpart. However, it does not admit a direct decentralized implementation. Nevertheless, Max-Gossip serves as a benchmark as to what is achievable in terms of state-dependent averaging algorithms.  

%In this case, in fact, A sample path of the local max-gossip algorithm is a special case when the node selected at random belongs to the edge with the maximum disagreement over the network. 
%For such a sample path the locally most ``different" edge is also the {\em globally} most ``different" edge.
%Here, by an instance we refer to a realization of the random matrix $A(\elmg(X))$.  We refer to this special instance of local-max gossip averaging matrix as the global max-gossip averaging matrix. 
%Therefore, during every iteration the global max-gossip averaging algorithm essentially selects the edge over which the exchange taking place is the one with the most disagreement across the entire network. 

%Mathematically, for a network $\Gcal=(\Vcal,\Ecal)$ and state estimates $X\in \R^{n\times d}$, we define the max-edge $\emax(X)\in \Ecal$ to be an edge that carries the maximum ``distance" between two adjacent nodes in $\Gcal$, {\em i.e.}, 

Given an initial state matrix $X(0)$, the Max-Gossip averaging scheme admits a state-dependent dynamics of the form %\cref{eq:stateconsensus} as follows
\begin{equation}%\label{eqn:maxgossip}
    A\big(t,X(t)\big) = B\Big(\emax\big(X(t)\big)\Big), \nonumber
\end{equation}
where the gossiping matrix is given by \cref{eq:matrix_form_1} and the max-edge is selected according to \cref{eq:emax}.  %We note that the complete characterization of the convergence rate of Max-Gossip seems to be an intractable problem. 

\subsection{Local Max-Gossip}\label{sec:LMG}

In \textit{Local Max-Gossip} introduced in~\cite{ustebay2010greedy} under the moniker of \textit{Greedy Gossip with Eavesdropping}, a random selected node gossips with the  neighbor $j\in \mathcal{N}_{i}$ that has the largest\footnote{In case there are multiple solutions to \cref{eq:max}, we may select the agent with the smallest index, without loss of optimality.} possible state discrepancy with $i$, i.e.,
\begin{equation}\label{eq:max}
    j = \arg \max_{j\in \mathcal{N}_{i}} \|\vecx_j(t) - \vecx_{i}(t) \|.
\end{equation}
Convergence is accelerated by 
 gossiping with the neighbor with the largest disagreement as this leads to the largest possible immediate reduction in the Lyapunov function used to capture the variance of the states in the network. 

Since the edge over which the gossiping occurs depends on the current state of the neighbors, the resulting averaging matrix is a state-dependent, random matrix. For a sequence of independently and uniformly distributed index sequence $\{s(t)\}$, the Local Max-Gossip dynamics can be written as a state-dependent averaging scheme as follows %\eqref{eq:stateconsensus} 
\begin{equation}%\label{eq:localmaxmat}
    A\big(t,X(t)\big)=\Agossip\Big(\big\{s(t),r_{s(t)}\big(X(t)\big)\big\}\Big),\nonumber
\end{equation}
where 
\begin{equation}\label{eqn:rsxt}
    % r_s(X) = \arg \max_{r:\{s,r\}\in\Ecal}\|\vecx_s-\vecx_r\| 
    r_s(X) = \arg \max_{r\in \Ncal_s}\|\vecx_s-\vecx_r\|.
\end{equation}

\subsection{Load-Balancing}

Another state-dependent algorithm known as \textit{Load-Balancing} can also be used to speed up convergence of average-consensus~\cite{nedic2009distributedquantization}. However, in contrast to the previous two cases, where only two nodes update at a given time, Load-Balancing is a synchronous averaging algorithm where all the agents operate simultaneously. 

In the traditional Load-Balancing  algorithm, the state at each agent is a scalar, which induces a total ordering amongst the agents, i.e., the neighbours of an agent are classified by having greater or smaller state values than the agent's current state. When the states at the agents are multi-dimensional vectors, a total ordering is not available and must be defined. 
We introduce a variant of  Load-Balancing based on the Euclidean distance between the states of any two agents as follows.

At time $t$, each agent $i \in [n]$ carries out the following steps:
\begin{enumerate}[1.]
    \item Agent $i$ sends its state to its neighbors. %This step requires the communication of $n\times d$ real numbers. 
    \item Agent $i$ computes the distance between its state and each of its neighbors. Let $\mathcal{S}_i$ denote the subset of neighbors of agent $i$ whose state have maximal Euclidean distance, i.e.,  
    \begin{equation}\label{eq:max_dissent_set}
        \mathcal{S}_i \triangleq \arg \max_{j \in \mathcal{N}_i} \ \|\vecx_i -\vecx_j\|.
    \end{equation}
    Agent $i$ sends an averaging request to the agents in $\mathcal{S}_i.$ \label{step:sendRequest}
  
    \item Agent $i$ receives averaging requests from its neighbors. If it receives a request from a single agent $j\in \mathcal{S}_i$, then it sends an acknowledgement to that agent. In the event that agent $i$ receives multiple requests, it sends an acknowledgement to one of the requests uniformly at random.

    \item If agent $i$ sends and receives an acknowledgement from agent $j$, then it updates its state as $\vecx_i \gets (\vecx_i + \vecx_{j})/2.$ 
\end{enumerate}

\av{The conditions for interaction between two nodes in Load Balancing is characterized in the following proposition.}
\begin{proposition}\label{prop:LBNecessity}
Consider a connected graph $\Gcal$ and a stochastic process $\big\{X(t),A\big(t,X(t)\big)\big\}$, where $A\big(t,X(t)\big)$ is the characterization of averaging according to the Load-Balancing algorithm, i.e. $A\big(t,X(t)\big)X(t)$ is the output of the Load-Balancing algorithm for a network with state matrix $X(t)$, $t \geq 0$. The following statements hold:
\begin{enumerate}
    \item Two agents $i,j$ such that $(i,j) \in \Ecal$ average their states only if
           \begin{multline}\label{eq:LBNecessity}
            \| \vecx_i(t) - \vecx_j(t) \| \geq  \max\Big\{\max_{r\in \Ncal_i\setminus\{j\}}\|\vecx_i(t) - \vecx_r(t)\|,  \max_{r\in \Ncal_j\setminus\{i\}}\|\vecx_j(t) - \vecx_r(t)\|\Big\}.
        \end{multline}
      \item  Let $(i,j) \in \Ecal$. If \cref{eq:LBNecessity} holds with strict inequality, then $i,j$ average their states. 
\end{enumerate}

\end{proposition}
\av{\cref{prop:LBNecessity} is proven in Appendix~\ref{appendix:LBNecessity}.}

\section{On the selection of Max-edges}\label{sec:maxedge}
Consider the stochastic process $\big\{X(t),A\big(t,X(t)\big)\big\}$, where $X(t)$ is the network state matrix, and $A\big(t,X(t)\big)$ a state-dependent averaging matrix. Let $\{\Fcal_t\}_{t=0}^\infty$ be a filtration such that $\Fcal_t$ is the $\sigma$-algebra generated by 
\begin{equation}
    \big\{\{X(k),A\big(k,X(k)\big)\mid k\leq t\big\}\setminus\big\{A\big(t,X(t)\big)\big\}. \nonumber
\end{equation}

We establish a non-zero probability that a pair of agents that constitute a max-edge will update their states for the averaging schemes discussed in \cref{sec:consensus}. 
\begin{proposition}\label{prop:LMLBContracting}
    Let $\big\{X(t),A\big(t,X(t)\big)\big\}_{t=0}^{\infty}$ be the random process generated by either Randomized Gossip, Local Max-Gossip, Max-Gossip, or Load-Balancing consensus schemes. Then, for the random indices $i^*,j^* \in \Vcal$ defined through the max-edge in \cref{eq:emax} as $\emax\big(X(t)\big)=\{i^*,j^*\}$, we have 
    \begin{equation}\label{eqn:maxEdgeDelta}
        \E \Big[A\big(t,X(t)\big)^T \! \! A\big(t,X(t)\big)  \mid \Fcal_t \Big]_{i^*j^*}\geq \delta \ \ \text{a.s.,}
    \end{equation}
    where $\delta = \min_{\{i,j\}\in \Ecal} P_{ij}/n$ for Randomized Gossip, such that $P_{ij}$ is the probability that node $i$ chooses node $j \in \mathcal{N}_j$; $\delta = 1/n$ for Local Max-Gossip; $\delta = 1/2$ for Global Max-Gossip; and $\delta=1/\big(2(n-1)^2\big)$ for Load-Balancing.
    \begin{comment}
          for some $\delta>0$. In fact, for
    \begin{enumerate}[i.]
        \item Randomized Gossip:
        \begin{equation}
        \delta =  \frac{\min_{\{i,j\}\in \Ecal} P_{ij}}{n},
        \end{equation}
               where $P_{ij}$ is the probability that node $i$ chooses node $j \in \mathcal{N}_i$.
        \item Local Max-Gossip: \begin{equation}
            \delta = \frac{1}{2n}.
        \end{equation} 
        \item Max-Gossip
        \begin{equation}
            \delta = \frac{1}{2}.
        \end{equation} 
        \item Load-Balancing:
        \begin{equation}
            \delta = \frac{1}{2(n-1)^2}.
        \end{equation} 
    \end{enumerate}
    \end{comment}
   
\end{proposition}

\av{\cref{prop:LMLBContracting} establishes that given the knowledge until time $t$, in expectation,  the agents comprising the max-edge based on the network state matrix $X(t)$, exchange their values with a positive weight bounded away from zero. Qualitatively, for gossip-based algorithms, this implies that there is a positive probability bounded away from zero that the agents comprising the max-edge carry out exchange of information with each other. 
We use \cref{prop:LMLBContracting} along with \cref{thm:maxEdgeContraction} to establish that the averaging matrices characterizing the algorithms discussed in  \cref{sec:consensus} are contracting. Therefore, the subgradient methods based on these averaging algorithms converge to the same optimal solution almost surely as stated in \cref{cor:ConvergenceExamples} of \cref{thm:main1}. In other words, as long as the averaging step involves gossip over the max-edge with positive probability (bounded away from zero), we will have a contraction in the Lyapunov function capturing the sample variance, which is a key step in proving the convergence of our averaging based-subgradient methods. \cref{prop:LMLBContracting} is proven in Appendix~\ref{appendix:LMLBContracting}.}
\section{Convergence of state-dependent Distributed Optimization
}\label{sec:conv}

In the previous section, we have set the stage for studying the convergence of state-dependent averaging-based distributed optimization algorithms. Our proofs rely on two properties: double stochasticity and the \textit{contraction property} (\cref{thm:maxEdgeContraction}).

To state the contraction property, we define the Lyapunov function $V:\R^{n \times d} \rightarrow \mathbb{R}$ as
\begin{equation} \label{eq:Lyapunov}
    V(X) \triangleq \sum_{i=1}^n\|\vecx_i  - \bar{\vecx}\|^2,
\end{equation}
where $X = [\vecx_1,\cdots,\vecx_n]^T$ and  $\bar{\vecx}= \frac{1}{n}\sum_{i=1}^n\vecx_i$.

\begin{theorem}[Contraction property]\label{thm:maxEdgeContraction}
Consider a connected graph $\Gcal$ and the stochastic process $\big\{X(t),A\big(t,X(t)\big)\big\}_{t=0}^{\infty}$ with a natural 
filtration $\{\Fcal_t\}_{t\geq 0}$ for the dynamics given by Eq.~\eqref{eq:update_mat_1}. 
If ${A\big(t,X(t)\big) \in \Fcal_{t+1}}$ is doubly stochastic for all $t \geq 0$, and for the random variables $i^*,j^* \in \Vcal$ defined through the max-edge in \cref{eq:emax} as $\emax\big(X(t)\big)=\{i^*,j^*\}$,
\begin{equation}\label{ineq:deltaCondition}
    \E\Big[A\big(t,X(t)\big)^T\! \!A\big(t,X(t)\big)\mid \Fcal_t\Big]_{i^*j^*} \geq \delta, \ \ \text{a.s.,}
\end{equation}
where  $\delta>0$, holds for all $t\geq 0$ and $X(0) \in \R^{n\times d},$ then
\begin{equation}\label{eq:thmContraction}
    \E\Big[V\big(A\big(t,X(t)\big)X(t)\big) \mid \Fcal_t\Big] \leq \lambda V\big(X(t)\big) \, \, \text{a.s.},
\end{equation}
where   $\lambda = 1-2\delta/\big((n-1)\diam^2\big).$
\end{theorem}

Theorem~\ref{thm:maxEdgeContraction} is proven in Appendix~\ref{appendix:edgecontraction} and provides our key new ingredient:  proving a contraction result for doubly stochastic averaging matrices containing the maximally dissenting edge. The proof of Theorem~\ref{thm:maxEdgeContraction} makes use of the double stochasticity of the matrices to characterize the exact one-step decrease in the Lyapunov function and then uses a clever trick to characterize its fractional decrease based on the fact that underlying communication graph is connected. 
\begin{remark}
Theorem~\ref{thm:maxEdgeContraction} also holds for time-varying graphs provided they remain connected at each time $t$. More precisely, the theorem holds for a sequence of connected graphs $\{\Gcal_t\}$ and at every time $t\geq 0$, for $i^*,j^*$ defined through $\emax(\Gcal_t,X(t))$, the inequality in \cref{ineq:deltaCondition} holds, then the inequality in \cref{eq:thmContraction} will hold with scaling at time $t$ being 
\begin{equation}
 \lambda_t = 1-\frac{2\delta}{(n-1)\mathrm{diam}({\Gcal_t)}^2} \leq 1 - \frac{2\delta}{(n-1)^3}.  \nonumber
\end{equation} 
Therefore, the contraction property for connected time-varying graphs holds with a factor of at most
$\underline{\lambda} \triangleq 1 - 2\delta/(n-1)^3.$
\end{remark}

For a connected graph $\Gcal$ and the stochastic process $\big\{X(t),A\big(t,X(t)\big)\big\}_{t= 0}^{\infty}$ with the %natural 
filtration $\{\Fcal_t\}_{t= 0}^{\infty}$ generated according to the dynamics in \cref{eq:update_mat_1}, we define a contracting averaging matrix as follows.
\begin{definition}[Contracting  averaging matrix]\label{asmpn:doublystochastic}
A state-dependent averaging matrix  $A\big(t,X(t)\big)$ is contracting with respect to the Lyapunov function $V(\cdot)$ in \cref{eq:Lyapunov} if there exists a $\lambda \in (0,1)$ such that
\begin{equation}\label{eq:defContractingMatrix}
    \E\Big[V\Big(A\big(t,X(t)\big)X(t)\Big) \mid \Fcal_t\Big] \leq \lambda V\big(X(t)\big) 
\end{equation}
holds a.s. for all $t\geq 0$. 
\end{definition}

{ The main result of this work establishes convergence guarantees for these dynamics as stated below.} 

\begin{theorem}[Almost sure convergence of state-dependent subgradient methods]\label{thm:main1}
Consider the distributed optimization problem in \cref{eq:def_DistOpt_Problem} and let Assumptions \ref{asmpn:Nonempty_solution} and \ref{asmpn:bounded_subgrad} hold. Assume a connected communication graph $\Gcal$ and the subgradient method in \cref{eq:update_mat_1}. If the random matrices $A\big(t,X(t)\big)$ in Eq.~\eqref{eq:update_mat_1} are 
doubly stochastic and contracting, and the step-sizes $\{\alpha(t)\}$ follow \cref{asmpn:Step-Size}, 
then for all initial conditions $X(0)\in \R^{n\times d}$, 
 \begin{equation}
    \lim_{t \to \infty} \vecw_i(t) = \vecw^*, \ \forall i \in [n], \ \mbox{ a.s.}, \nonumber
 \end{equation}
 where $\vecw^* \in \sets{W}^*$.
\end{theorem}
\cref{thm:main1} establishes the almost-sure convergence of the state variables to an optimal solution of \cref{eq:def_DistOpt_Problem}, based on the consensus-based subgradient methods where the averaging matrices are doubly stochastic and contracting.
\cref{thm:maxEdgeContraction} provides a simplified condition, the presence of averaging over the `max-edge', which, when satisfied, implies the averaging matrix is contracting.  Note that, as shown in \cref{prop:LMLBContracting}, this simplified condition holds for Local Max-Gossip, Max-Gossip, and Load-Balancing averaging. Thus, we have the subsequent corollary following immediately from \cref{prop:LMLBContracting}, \cref{thm:maxEdgeContraction}, and \cref{thm:main1}.

\begin{corollary}\label{cor:ConvergenceExamples}
Consider the distributed optimization problem in \cref{eq:def_DistOpt_Problem} and let Assumptions \ref{asmpn:Nonempty_solution} and \ref{asmpn:bounded_subgrad} hold. Assume a connected communication graph $\Gcal$ and the subgradient method \eqref{eq:update_mat_1} where the averaging matrices $A\big(t,X(t)\big)$ in Eq.~\eqref{eq:update_mat_1} are based solely on either the Local Max-Gossip, Max-Gossip or Load-Balancing averaging, and the step-sizes $\{\alpha(t)\}$ follow \cref{asmpn:Step-Size}. Then 
 \begin{equation}
    \lim_{t \to \infty} \vecw_i(t) = \vecw^*, \,\,  \forall i \in [n], \ \ \text{a.s.}, \nonumber
 \end{equation}
 for all initial condition $X(0)\in \R^{n\times d}$, and some $\vecw^* \in \sets{W}^*$.
\end{corollary}
For the remainder of this section, we provide the key steps and  results that are needed to prove \cref{thm:main1}. We defer the proof of these technical results to the Appendix. 

The proof strategy for Theorem~\ref{thm:main1} can be broken down into two main steps: (i) showing that the evolution of the dynamics followed by the average state variable $\{\bar{\vecx}(t)\}$ converges to a solution of the optimization problem in \eqref{eq:def_DistOpt_Problem} and (ii) every node $i \in \Vcal$ tracks the dynamics of this average state variable such that the tracking error goes to zero.
The first step requires the following result which establishes a bound on the accumulation of the tracking error for every agent. 

\begin{lemma}\label{lemma:dist_from_mean}
Let $\Gcal$ be a connected graph and consider sequences $\{W(t)\}$ and $\{X(t)\}$ generated by the subgradient method in Eq.~\eqref{eq:update_mat_1} using sate-dependent, doubly stochastic and contracting averaging matrices $A\big(t,X(t)\big)$. If Assumptions~\ref{asmpn:bounded_subgrad} and  \ref{asmpn:Step-Size} hold, then for any initial estimates $X(0)\in \mathbb{R}^{n\times d}$, the following hold  a.s.\ for all $i\in [n]$
 \begin{align}
     \lim_{t \to \infty} \left\|\vecw_i(t+1) - \bar{\vecx}(t)\right\| = 0,\qquad\text{and}\cr
     \sum_{t=0}^{\infty}\alpha(t+1)\E\left[ \|\vecw_i(t+1) - \bar{\vecx}(t) \| \mid \Fcal_t\right]<\infty. \nonumber
\end{align}  
\end{lemma}

Lemma~\ref{lemma:dist_from_mean}, which is proven in Appendix~\ref{appendix:distfrommean}, establishes guarantees on the consensus error for the local estimates $\vecw_i(t)$. Lemma~\ref{lemma:iterate_decomp} will be used to bound the distance of the average state $\bar{\vecx}(t)$  to an optimal point. 

\begin{lemma}[Lemma 8,~\cite{Nedic_2013}]\label{lemma:iterate_decomp}
Suppose that Assumption \ref{asmpn:bounded_subgrad} holds. Then, for any connected graph $\Gcal$, initial condition $X(0)\in \R^{n\times d}$, $\vec{v}\in \R^d$, and $t \geq 0$, for the dynamics $\{X(t),A(t,X(t))\}$ of the subgradient method Eq.~\eqref{eq:update_mat_1} where $A(t,X(t))$ are doubly stochastic, 
we have 
\begin{multline}
    \E\big[\|\bar{\vecx}(t+1) - \vec{v}\|^2 \mid \Fcal_t \big]  
    \leq  
    \|\bar{\vecx}(t) - \vec{v}\|^2 -  \alpha(t+1)\frac{2}{n}\Big(F\big(\bar{\vecx}(t)\big) - F(\vec{v})\Big) \\
    +\alpha(t+1)\frac{4}{n}\sum_{i=1}^n L_i \E[\|\vecw_i(t+1) \textbf{}- \bar{\vecx}(t)\|\mid \Fcal_t]  +\alpha^2(t+1)\frac{L^2}{n^2}, \ \  \text{a.s.} \nonumber
\end{multline}
\end{lemma}

We note that Lemma 8 in~\cite{Nedic_2013} was originally intended for state independent dynamics. However, its proof only relies on the double stochasticity of the averaging matrices, convexity of the local functions, boundedness of the subgradients, and not on whether the averaging is state-dependent or not. 
Finally, combining the above two results implies that the distance of each agent's local  estimate $\vecx_i(t)$ to the optimal set $\sets{W}^*$ will be \textit{approximately} decreasing. The following result then will be used to show that this \textit{approximate} decrease results in convergence to $\sets{W}^*$. 
\begin{lemma}
\label{lemma:convex}
Consider a minimization problem $\min_{\vecx \in \R^d} f(\vecx)$, where $f:\R^d \to \R$ is a continuous function. Assume that the solution set $\sets{X}^*$ of the problem is nonempty. Let $\{\vecx_t\}$ be a stochastic process such that for all $\vecx \in \sets{X}^*$ and for all $t \geq 0$, 
\begin{equation}
    \E[\|\vecx_{t+1} - \vecx\|^2 \mid \Fcal_t] 
    \leq (1+b_t)\|\vecx_t - \vecx\|^2  - a_t\big(f(\vecx_t) - f(\vecx)\big) + c_t \qquad\textrm{a.s.}, \nonumber
\end{equation}
where $b_t\geq 0$, $a_t\geq 0$, and $c_t \geq 0$ for all $t \geq 0$ and ${\sum_{t=0}^\infty b_t <\infty} $, $\sum_{t=0}^\infty a_t = \infty $, and $\sum_{t=0}^\infty c_t < \infty$ a.s. Then the sequence $\{\vecx_t\}$ converges to a solution %(random variable) 
$\vecx^* \in \sets{X}^*$ a.s.
\end{lemma}
This result has been proven as part of~\cite[Theorem~1]{aghajan2020distributed} but due to the stand-alone significance of the result we have stated it as a lemma above and its proof is provided in Appendix~\ref{appendix:lemmaConvex}.
Now, we are ready to formally prove Theorem~\ref{thm:main1} by combining the aforementioned results.\\
\begin{proof}[{Proof of Theorem~\ref{thm:main1}}]
From Lemma~\ref{lemma:iterate_decomp}, for ${\vec{v} = \vecw^*\in \sets{W}^*}$, we have
\begin{multline}%\label{eq:decomp}
    \E[\|\bar{\vecx}(t+1) - \vecw^*\|^2 \mid \Fcal_t]\leq  
      \|\bar{\vecx}(t) - \vecw^*\|^2  - \frac{2\alpha(t+1)}{n}\Big(F\big(\bar{\vecx}(t)\big) - F^*\Big) +  \alpha^2(t+1)\frac{L^2}{n^2} \\ +4\frac{\alpha(t+1)}{n}\sum_{i=1}^n L_i \E[\|\vecw_i(t+1) - \bar{\vecx}(t)\|\mid \Fcal_t],\nonumber
\end{multline}
for all $t \geq 0$. 
From Lemma~\ref{lemma:dist_from_mean}, we know that 
\begin{align}
    &\sum_{t=0}^{\infty} 4\frac{\alpha(t+1)}{n}\sum_{i=1}^n L_i \E[\|\vecw_i(t+1) - \bar{\vecx}(t)\|\mid \Fcal_t] = \nonumber
    \\\nonumber
    &\sum_{i=1}^n \frac{4L_i}{n} \sum_{t=0}^{\infty}\alpha(t+1)\E[\|\vecw_i(t+1) - \bar{\vecx}(t)\| \mid \Fcal_t]< \infty  \ \ a.s.  
\end{align}
Furthermore, $\alpha(t)$ is not summable and ${\sum_{t=0}^\infty \alpha^2(t) < \infty}$. Therefore, all the conditions for Lemma~\ref{lemma:convex} hold with  $a_t = 2\alpha(t+1)/n$, $b_t = 0$, and 
\begin{equation}
c_t = \alpha(t+1)\frac{4}{n}\sum_{i=1}^n L_i\E[ \|\vecw_i(t+1) - \bar{\vecx}(t)\| \mid \Fcal_t ]  + \alpha^2(t+1)\frac{L^2}{n^2}. \nonumber
\end{equation}

Therefore, from Lemma~\ref{lemma:convex}, the sequence $\{\bar{\vecx}(t)\}$ converges to a solution $\hat{\vecw} \in \sets{W}^*$ almost surely. Finally, Lemma~\ref{lemma:dist_from_mean} implies that $\lim_{t \to \infty} \|\vecw_i(t+1)- \bar{\vecx}(t)\| = 0$ for all $i \in [n]$ almost surely. Therefore, the sequences $\{\vecw_i(t+1)\}$ converge to the same solution $\hat{\vecw}\in \sets{W}^*$ for all $i \in [n]$ almost surely. 
\end{proof}

% \documentclass[Journal/arxiv-version.tex]{subfiles}

% \begin{document}
\color{black}
\section{Convergence Rate}

In this section we discuss the convergence rate of the time-averaged version of the discussed state-dependent consensus based subgradient methods when the step size at time $t$ is set as $1/\sqrt{t}$ for $t\geq 1$. The convergence rates for the different algorithm differ via the contraction factor $\lambda$ defined for the contracting averaging matrix through \eqref{eq:defContractingMatrix}.  

Let $\lambda_t$ be the contraction factor defined through the contracting property of the matrices at time $t$. More precisely, for all $t \geq 0$ 
\begin{align*}
    \E [ V(A(t,X(t))X(t)) ] \leq \lambda_t V(X(t)),
\end{align*}
where $\lambda_t = \lambda \phi_t$ with $\phi_t \in (0,1)$. Here, $\lambda$ is the uniform bound on the contraction factor and $\lambda_t = \lambda_t(X(t))$ is a state-dependent (and possibly time-dependent) contraction factor. We refer to the tighter contraction bound to point out the improvement in convergence rate in state-dependent consensus based subgradient method. The proof of the convergence rates closely follow the proof provided in \cite{Nedic_2013}.

In the following lemma, we establish the convergence rate of the accumulation of error between the estimate for each agent from the mean of the estimates over all agents.  
\begin{lemma}\label{lemma:convergence_rate_x}
Under the assumptions of \Cref{thm:main1} with $\alpha(t) = 1/\sqrt{t}$, we have 
\begin{align}
    \frac{1}{\sqrt{n}}\sum_{k=0}^t \alpha(k+1) \sum_{i=1}^n\E[\|\vecw_i(k+1)- \bar{\vecw}_i(k+1)\|] \leq (K_1\E[\|X(0)-\bar{X}(0)\|_F] + LK_2(1+\ln t))
\end{align}
and 
\begin{align}\label{ineq:lemma_recursive2}
    \frac{1}{\sum_{k=0}^t \alpha(k+1)}&\sum_{k=0}^t \alpha(k+1) \sum_{i=1}^n \frac{1}{\sqrt{n}}\E[\|\vecw_i(k+1)- \bar{\vecw}_i(k+1)\|] \cr &\leq \frac{1}{\sqrt{t+1}}(K_1\E[\|X(0)-\bar{X}(0)\|_F] + LK_2(1+\ln t)),
\end{align}
where $K_1 = K_2 = \frac{\rootl}{1-\rootl}.$
\end{lemma}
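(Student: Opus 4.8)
The plan is to reduce everything to a one-step recursion for the expected consensus error and then unroll it, so that the whole argument rests on geometric and harmonic summation. First I would pass from the per-agent quantity on the left to a Frobenius-norm quantity: by Cauchy--Schwarz, $\frac{1}{\sqrt n}\sum_{i=1}^n \|\vecw_i(k+1)-\bar\vecw_i(k+1)\| \le \|X(k+1)-\bar X(k+1)\|_F$, the factor $1/\sqrt n$ being exactly the Cauchy--Schwarz constant, so it suffices to control $d(k) := \E[\|X(k)-\bar X(k)\|_F]$. Using the contraction property $\E[V(A(t,X(t))X(t))]\le \lambda_t V(X(t))$ with $V$ the squared disagreement norm, together with Jensen's inequality to pass from the squared norm to the norm (this is precisely where $\rootl=\sqrt\lambda$ is produced), and the uniform subgradient bound $L$ to estimate the gradient-step perturbation (the agent-count factors being absorbed by the normalization built into $V$ and the step above), I expect to obtain a recursion of the form
$$d(k+1) \le \rootl\, d(k) + \rootl\, C L\, \alpha(k+1),$$
where the perturbation is itself contracted because the averaging matrix is applied after (or together with) the subgradient increment; this extra factor $\rootl$ on the forcing is exactly what will make the final coefficient $\frac{\rootl}{1-\rootl}$ rather than $\frac{1}{1-\rootl}$.

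Next I would unroll this recursion to
$$d(k+1)\le \rootl^{\,k+1} d(0) + C L\sum_{r=0}^{k}\rootl^{\,k+1-r}\alpha(r+1),$$
multiply through by $\alpha(k+1)$, and sum over $k=0,\dots,t$. The initial-condition term is handled by $\alpha(k+1)\le 1$ and the geometric series $\sum_{k\ge 0}\rootl^{\,k+1}=\frac{\rootl}{1-\rootl}=K_1$, giving the $K_1\E[\|X(0)-\bar X(0)\|_F]$ contribution. For the forcing term I would interchange the order of summation, use that $\alpha$ is non-increasing so $\alpha(k+1)\le\alpha(r+1)$ whenever $k\ge r$, and bound the inner geometric sum $\sum_{k=r}^t\rootl^{\,k+1-r}\le\frac{\rootl}{1-\rootl}=K_2$. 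This collapses the double sum to $K_2\sum_{r=0}^t\alpha(r+1)^2=K_2\sum_{r=0}^t\frac{1}{r+1}$, and the harmonic bound $\sum_{r=0}^t\frac{1}{r+1}\le 1+\ln t$ yields the $LK_2(1+\ln t)$ term, establishing the first inequality.

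The second inequality then follows immediately by dividing the first by $\sum_{k=0}^t\alpha(k+1)$ and using the elementary lower bound $\sum_{k=0}^t\alpha(k+1)=\sum_{j=1}^{t+1}\frac{1}{\sqrt j}\ge\sqrt{t+1}$ (provable by induction on $t$), which produces the prefactor $\frac{1}{\sqrt{t+1}}$.

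I expect the main obstacle to be the first step: deriving the one-step recursion with the correct factor $\rootl$ out of the contraction inequality stated for $V$ on the squared norm. This requires carefully taking expectations over the state-dependent random matrix $A(t,X(t))$ (conditioning on $X(t)$, applying the contraction, then using the tower property and Jensen) while simultaneously tracking the deterministic subgradient perturbation so that it too carries a factor $\rootl$ and so that all agent-count constants cancel, leaving exactly $K_1=K_2=\frac{\rootl}{1-\rootl}$. Once the recursion is in hand with the forcing term contracted, the geometric and harmonic bookkeeping is routine; I would follow the argument of \cite{Nedic_2013} for this part.
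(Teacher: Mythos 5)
Your proposal is correct and follows essentially the same route as the paper's proof: Cauchy--Schwarz to pass to the Frobenius disagreement norm, a one-step recursion $\E[\|W(k+1)-\bar{W}(k+1)\|_F]\le \rootl\,\E[\|W(k)-\bar{W}(k)\|_F]+\rootl\,\E[\|E(k)-\bar{E}(k)\|_F]$ obtained from the contraction of $V$ together with Jensen (with the forcing term also contracted, which is exactly why $K_1=K_2=\frac{\rootl}{1-\rootl}$), then unrolling, exchanging the order of summation using monotonicity of $\alpha$, geometric and harmonic bounds, and finally $\sum_{k=0}^t\alpha(k+1)\ge\sqrt{t+1}$. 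The only slip is an index shift in the forcing: the paper's perturbation satisfies $V(E(s))\le L^2/s$ for $s=1,\dots,t$, so its harmonic sum is $\sum_{s=1}^t 1/s\le 1+\ln t$, whereas your forcing $\alpha(r+1)$ for $r=0,\dots,t$ yields $\sum_{j=1}^{t+1}1/j\le 1+\ln(t+1)$ (your claimed bound $1+\ln t$ fails already at $t=1$), a harmless discrepancy that merely replaces $\ln t$ by $\ln(t+1)$ in the final estimate.
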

\begin{proof}
From triangle inequality similar to \eqref{eq:triangle_ineq_VX}, we know for all $t \geq 1$ 
\begin{align}
    \E[\|W(t+1)-\bar{W}(t+1)\|_F] &\leq  \sqrt{\lambda_t} \E[\|W(t)-\bar{W}(t)\|_F] + \sqrt{\lambda_t} \E[\|E(t)-\bar{E}(t)\|_F]. \nonumber
\end{align}
Repeatedly applying the above inequality and since the perturbation is bounded as $V(E(t)) \leq \frac{L^2}{t}$ for all $t \geq 1$ we get 
\begin{align}
    \E[\|W(t+1)-\bar{W}(t+1)\|_F] &\leq  \prod_{s=1}^{t}\sqrt{\lambda_s} \E[\|W(1)-\bar{W}(1)\|_F] + \sum_{s=1}^t \prod_{k=s}^{t}\sqrt{\lambda_k} \E[\|E(s)-\bar{E}(s)\|_F] \cr 
    &\leq \prod_{s=0}^{t}\sqrt{\lambda_s} \E[\|W(0)-\bar{W}(0)\|_F] + \sum_{s=1}^t \prod_{k=s}^{t}\sqrt{\lambda_k} \frac{L}{\sqrt{s}}.\nonumber
\end{align}
For brevity, define $\phi(t:s)= \prod_{k=s}^{t}\phi(k)$ and rewrite the above inequality as 
\begin{align}
    \E[\|W(t+1)-\bar{W}(t+1)\|_F]
    &\leq \rootl^{t+1}\phi(t:0) \E[\|W(0)-\bar{W}(0)\|_F] + \sum_{s=1}^t \sqrt{\lambda}^{t-s+1}\phi(t:s) \frac{L}{\sqrt{s}} \label{ineq:recursive_expansion_vwt}
\end{align}
To obtain the bound on accumulation of the errors, using \eqref{ineq:recursive_expansion_vwt} we get 
\begin{align}
    &\frac{1}{\sqrt{n}}\sum_{k=0}^{t} \alpha(k+1)\sum_{i=1}^n \E[\|\vecw_i(k+1)-\bar{\vec{w}}_i(k+1)\|] \leq  \sum_{k=0}^{t} \alpha(k+1)\|W(k+1)-\bar{W}(k+1)\|_F \cr
    &\leq  \sum_{k=0}^{t} \frac{1}{\sqrt{k+1}}\rootl^{k+1} \phi(k:0)\E[\|X(0)-\bar{X}(0)\|_F] +L \sum_{k=1}^{t} \frac{1}{\sqrt{k+1}} \sum_{s=1}^k \frac{\rootl^{k+1-s} \phi(k:s)}{\sqrt{s}} \cr
    &=  c_{1}(t)\E[\|X(0)-\bar{X}(0)\|_F] +L c_2(t), \nonumber
\end{align}
where $c_1(t), c_2(t)$ are given by 
\begin{align}\label{eq:c1_c2_def}
    c_1(t):= \sum_{k=0}^{t} \frac{\rootl^{k+1}}{\sqrt{k+1}} \phi(k:0),
    \qquad c_2(t):=\sum_{k=1}^{t} \frac{1}{\sqrt{k+1}} \sum_{s=1}^k \frac{\rootl^{k+1-s} \phi(k:s)}{\sqrt{s}}.
\end{align} Using the decreasing property of $\alpha(t)$, the fact that $\phi(t)\leq 1$ for all $t\geq 0$, and the expression for a sum of a geometric series, we can uniformly bound $c_1(t)$ by $\frac{\sqrt{\lambda}}{1-\sqrt{\lambda}}$. For $c_2(t)$, note that 
\begin{align}
    c_2(t) &\leq  \sum_{k=1}^{t}  \sum_{s=1}^k \frac{\rootl^{k+1-s} \phi(k:s)}{s} 
            \leq  \sum_{k=1}^{t}  \sum_{s=1}^k \frac{\rootl^{k+1-s} }{s} \cr 
            &=  \sum_{s=1}^{t}  \frac{1}{s}\sum_{k=1}^t \rootl^{k+1-s}  \leq  \frac{\rootl}{1-\rootl}\sum_{s=1}^{t}  \frac{1}{s} \leq \frac{\rootl}{1-\rootl}(1+\ln t),  \label{ineq:bound_c2}
\end{align}
where the second inequality in \eqref{ineq:bound_c2} follows from 
\[  \sum_{s=1}^t \frac{1}{s} = 1+ \sum_{s=2}^t \frac{1}{s} \leq 1+\int_{1}^t \frac{du}{u}= 1+ \ln t.\]
Define $K_1 := \frac{\rootl}{1-\rootl}$ and $K_2:= \frac{\rootl}{1-\rootl}.$
Therefore, we have 
\begin{align}
     \frac{1}{\sqrt{n}}\sum_{k=0}^{t} \alpha(k+1)\sum_{i=1}^n\E[\|\vecw_i(k+1)-\bar{\vec{w}}_i(k+1)\|] \leq K_1 \E[\|X(0)-\bar{X}(0)\|_F] + L  K_2 (1+\ln t). \nonumber
    %  \leq \frac{\rootl}{1-\rootl} \E[\|X(0)-\bar{X}(0)\|_F] + L  \frac{\rootl}{1-\rootl}(1+\ln t). \nonumber
\end{align}
Finally using the fact that $\sum_{k=0}^{t} \alpha(k+1) \geq \int_{0}^{t+1} \frac{du}{u+1} \geq  \sqrt{t+1}$ we get inequality \eqref{ineq:lemma_recursive2}.
\end{proof}
%% Lemma Convergence of f computed at xbar
Using the accumulation of variance of the state estimates we establish an upper bound on the expected deviation of the global function at the time-averaged version of the average state estimates from the optimal value in the following lemma. 
\begin{lemma}\label{lemma:converge_f_xbar}
Under the assumptions of \Cref{thm:main1} with $\alpha(t) = 1/\sqrt{t}$ for all $t\geq 1$ and for any $\vecw^* \in \mathcal{W}^*$ we have 
\begin{align}
    \E\left[F\left( \frac{ \sum_{k=0}^t \alpha(k+1)\bar{\vecx}(k) }{ \sum_{k=0}^t \alpha(k+1)}\right) - F(\vecw^*)\right]
    &\leq  \frac{n}{2} \frac{ \|\bar{\vecx}(0)-\vecw^*\| }{\sqrt{t+1}} + \frac{ L^2 (1+\ln(t+1)) }{2n \sqrt{t+1}}\cr 
    &+ \frac{2L\sqrt{n}K_1 }{\sqrt{t+1}} \E[\|X(0)-\bar{X}(0)\|_F] + 2L^2K_2\sqrt{n}\frac{1+\ln t}{\sqrt{t+1}}, \nonumber
\end{align}
where $K_1 = K_2 = \frac{\rootl}{1-\rootl}.$
\end{lemma}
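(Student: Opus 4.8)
The plan is to replicate the supermartingale-style argument of \cite{Nedic_2013} for the network-averaged iterate $\bar{\vecx}(k)$ and then convert the accumulated optimization progress into a bound on the time-averaged objective gap through convexity. Write $\hat{\vecx}(t) := \left(\sum_{k=0}^t\alpha(k+1)\bar{\vecx}(k)\right)/\left(\sum_{k=0}^t\alpha(k+1)\right)$ for the time-averaged estimate and let $g_i(k)$ denote a subgradient of $f_i$ at $\vecw_i(k)$. First I would set up the one-step recursion for the mean. Since the averaging matrices preserve the network mean, $\bar{\vecx}(k+1) = \bar{\vecx}(k) - \frac{\alpha(k+1)}{n}\sum_{i=1}^n g_i(k)$, and expanding $\|\bar{\vecx}(k+1)-\vecw^*\|^2$ produces the base inequality
\[
\|\bar{\vecx}(k+1)-\vecw^*\|^2 \le \|\bar{\vecx}(k)-\vecw^*\|^2 - \tfrac{2\alpha(k+1)}{n}\sum_{i=1}^n\langle g_i(k),\,\bar{\vecx}(k)-\vecw^*\rangle + \tfrac{\alpha(k+1)^2}{n^2}\Big\|\sum_{i=1}^n g_i(k)\Big\|^2,
\]
where the last quadratic term is controlled by the same subgradient bound that gives $V(E(k))\le L^2/k$ in \Cref{lemma:convergence_rate_x}; after summation it contributes the stated $\frac{L^2}{2n}(1+\ln(t+1))$ term.

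Next I would decompose the cross term. Writing $\bar{\vecx}(k)-\vecw^* = (\bar{\vecx}(k)-\vecw_i(k)) + (\vecw_i(k)-\vecw^*)$, applying the subgradient inequality to the second summand and the Lipschitz bound $\|g_i(k)\|\le L$ to the first, together with a second use of Lipschitz continuity to replace $f_i(\vecw_i(k))$ by $f_i(\bar{\vecx}(k))$, yields
\[
\sum_{i=1}^n\langle g_i(k),\,\bar{\vecx}(k)-\vecw^*\rangle \;\ge\; \big(F(\bar{\vecx}(k))-F(\vecw^*)\big) - 2L\sum_{i=1}^n\|\vecw_i(k)-\bar{\vecx}(k)\|.
\]
Substituting this into the base inequality, rearranging to isolate $\frac{2\alpha(k+1)}{n}\big(F(\bar{\vecx}(k))-F(\vecw^*)\big)$, summing over $k=0,\dots,t$ so the distance terms telescope down to $\|\bar{\vecx}(0)-\vecw^*\|^2$, and multiplying by $n/2$ gives
\[
\sum_{k=0}^t\alpha(k+1)\big(F(\bar{\vecx}(k))-F(\vecw^*)\big) \le \tfrac{n}{2}\|\bar{\vecx}(0)-\vecw^*\|^2 + 2L\sum_{k=0}^t\alpha(k+1)\sum_{i=1}^n\|\vecw_i(k)-\bar{\vecx}(k)\| + \tfrac{L^2}{2}\sum_{k=0}^t\alpha(k+1)^2.
\]

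I would then take expectations and bound the accumulated consensus error by \Cref{lemma:convergence_rate_x} (noting $\bar{\vecw}_i(k)=\bar{\vecx}(k)$, up to aligning the time index), which contributes exactly the $2L\sqrt{n}K_1\,\E[\|X(0)-\bar{X}(0)\|_F]$ and $2L^2\sqrt{n}K_2(1+\ln t)$ terms, and estimate $\sum_{k=0}^t\alpha(k+1)^2 = \sum_{k=0}^t\frac{1}{k+1}\le 1+\ln(t+1)$. Finally, applying Jensen's inequality ($F$ convex) to obtain $F(\hat{\vecx}(t))\le \big(\sum_k\alpha(k+1)F(\bar{\vecx}(k))\big)/\big(\sum_k\alpha(k+1)\big)$ and dividing through by $\sum_{k=0}^t\alpha(k+1)\ge\sqrt{t+1}$ produces the four terms of the claim.

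The main obstacle will be the coupling in the cross-term decomposition: the two applications of Lipschitz continuity must be arranged so that the leftover error is precisely the weighted accumulation $\sum_k\alpha(k+1)\sum_i\|\vecw_i(k)-\bar{\vecx}(k)\|$ controlled by \Cref{lemma:convergence_rate_x}, and the bookkeeping of the factors $1/n$, $n/2$, and $\sqrt{n}$ (arising respectively from the mean dynamics, from clearing the recursion, and from the Frobenius-norm accumulation bound) must be tracked carefully so that the constants $K_1,K_2$ and the rates $1/\sqrt{t+1}$ and $(1+\ln t)/\sqrt{t+1}$ reproduce the stated bound.
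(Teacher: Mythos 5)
Your proposal is correct and follows essentially the same route as the paper: the summed inequality you derive by expanding $\|\bar{\vecx}(k+1)-\vecw^*\|^2$ and decomposing the cross term is precisely the paper's \cref{lemma:iterate_decomp}, which the paper's proof simply cites before dividing by $S(t+1)=\sum_{k=0}^t\alpha(k+1)$, invoking \cref{lemma:convergence_rate_x}, using $\sum_{k=0}^t\alpha^2(k+1)\le 1+\ln(t+1)$ and $S(t+1)\ge\sqrt{t+1}$, and concluding by convexity, exactly as you do. The only blemishes are bookkeeping: the subgradients should be evaluated at $\vecw_i(k+1)$ so that the consensus error is $\|\vecw_i(k+1)-\bar{\vecw}(k+1)\|$ (matching what \cref{lemma:convergence_rate_x} actually bounds, as you acknowledge), and the last term of your summed display should be $\tfrac{L^2}{2n}\sum_{k=0}^t\alpha^2(k+1)$ rather than $\tfrac{L^2}{2}\sum_{k=0}^t\alpha^2(k+1)$, consistent with the final constant you yourself state.
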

\begin{proof}
By taking expectation on both sides for the inequality  \cref{lemma:iterate_decomp}, for any $\vec{v} \in \R^d$ and $t \geq 0$ we have 
\begin{align}
    \sum_{k=0}^t \frac{2\alpha(k+1)}{n} \E[F(\bar{\vecx}(k))- F(\vec{v})] \leq \|\bar{\vecx}(0) - \vec{v} \|^2 
    &+ \sum_{k=0}^t \frac{4\alpha(k+1)}{n}\sum_{i=1}^n L_i \E[\|\vecw_i(k+1) - \bar{\vecw}(t+1) \|] \cr
    &+ \sum_{k=0}^t \alpha^2(k+1)\frac{L^2}{n^2}, \nonumber
\end{align}
since $\bar{\vecw}(t+1) = \bar{\vecx}(t)$ for all $t \geq 0$. 
Define $S(t+1) = \sum_{k=0}^t \alpha(k+1)$. Dividing the inequality above by $\frac{2 S(t+1)}{n} $ we get
\begin{align}
    \sum_{k=0}^t \frac{\alpha(k+1)}{S(t+1)} \E[F(\bar{\vecx}(k))- F(\vec{v})] \leq \frac{n}{2}\frac{\|\bar{\vecx}(0) - \vec{v} \|^2}{S(t+1)}
    &+ \sum_{k=0}^t \frac{2\alpha(k+1)}{S(t+1)}\sum_{i=1}^n L_i \E[\|\vecw_i(k+1) - \bar{\vecw}(t+1) \|] \cr
    &+ \frac{1}{S(t+1)}\sum_{k=0}^t \alpha^2(k+1)\frac{L^2}{2n}. \nonumber
\end{align}

From \cref{lemma:convergence_rate_x} we have 
\begin{align}
    &\sum_{i=1}^n \sum_{k=0}^t \frac{\alpha(k+1)}{S(t+1)}\sum_{i=1}^n L_i \E[\|\vecw_i(k+1) - \bar{\vecw}(t+1) \|] \cr 
    &\leq \frac{K_1\sqrt{n}}{\sqrt{t+1}}\E[\|X(0)-\bar{X}(0)\|_F] + LK_2\sqrt{n}\frac{(1+\ln t)}{\sqrt{t+1}}.
    % &\leq \frac{L \sqrt{n}\sqrt{\lambda}}{ (1-\sqrt{\lambda})\sqrt{t+1}}(\E[\|X(0)-\bar{X}(0)\|_F] + L(1+\ln t)).
\end{align}
Furthermore as $\sum_{k=0}^t \alpha^2(k+1) = \sum_{k=0}^t \frac{1}{k+1} \leq 1 + \ln (t+1)$ and $S(t+1) \geq \sqrt{t+1}$, for $\vec{v} = \vecw^*$ we have 
\begin{align}
    \sum_{k=0}^t \frac{\alpha(k+1)}{S(t+1)} \E[F(\bar{\vecx}(k))- F(\vec{w}^*)] &\leq \frac{n}{2}\frac{\|\bar{\vecx}(0) - \vec{w}^* \|^2}{\sqrt{t+1}}\cr
    &+ 2 \frac{LK_1\sqrt{n}}{ \sqrt{t+1}}\E[\|X(0)-\bar{X}(0)\|_F] + L^2K_2\sqrt{n}\frac{1+\ln t}{\sqrt{t+1}}\cr
    &+ \frac{1+\ln(t+1)}{\sqrt{t+1}}\frac{L^2}{2n}\nonumber
    % &+ 2 \frac{L \sqrt{n}\sqrt{\lambda}}{ (1-\sqrt{\lambda})\sqrt{t+1}}(\E[\|X(0)-\bar{X}(0)\|_F] + L(1+\ln t))\cr
    % &+ \frac{1+\ln(t+1)}{\sqrt{t+1}}\frac{L^2}{2n}\nonumber
\end{align}
which upon rearrangement gives us the result. 
\end{proof}
Finally, we provide a bound on the expected deviation of the global function computed at the time averaged version of the state estimates of any agent from the optimal value in the following theorem.
\begin{theorem}\label{thm:convergence_rate_f}
Consider the assumptions of \Cref{thm:main1} with $\alpha(t) = 1/\sqrt{t}$ for all $t\geq 1$ and   $\vecw^* \in \mathcal{W}^*$. For $\tilde \vecw_i(t+1) = \frac{\sum_{k=0}^t \alpha(k+1)\vecw_i(k+1)}{\sum_{k=0}^t\alpha(k+1)}$,  we have 
\begin{align}
      \E[F(\tilde \vecw_i(t+1)) - F(\vecw^*)]&\leq \frac{n}{2} \frac{ \|\bar{\vecx}(0)-\vecw^*\| }{\sqrt{t+1}} + \frac{ L^2 (1+\ln(t+1)) }{2n \sqrt{t+1}}\cr  
     &+ \frac{L(2\sqrt{n}+1) K_1}{\sqrt{t+1}} \E[\|X(0)-\bar{X}(0)\|_F] + L^2 K_2(2\sqrt{n}+1)\frac{1+\ln t}{\sqrt{t+1}},\nonumber
\end{align}
where $K_1 = K_2 = \frac{\rootl}{1-\rootl}.$
\end{theorem}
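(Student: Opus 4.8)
The plan is to reduce the per-agent statement to the already-established averaged statement in \Cref{lemma:converge_f_xbar} by splitting the error at agent $i$ into a consensus (disagreement) part and a centralized part. Writing $S(t+1) = \sum_{k=0}^t \alpha(k+1)$ and letting $\tilde{\bar\vecw}(t+1) = \frac{\sum_{k=0}^t \alpha(k+1)\bar\vecx(k)}{S(t+1)}$ denote the time-averaged mean (which, via $\bar\vecw(k+1) = \bar\vecx(k)$, is exactly the point whose objective value is bounded in \Cref{lemma:converge_f_xbar}), I would start from the decomposition
\begin{align*}
F(\tilde\vecw_i(t+1)) - F(\vecw^*) = \big[F(\tilde\vecw_i(t+1)) - F(\tilde{\bar\vecw}(t+1))\big] + \big[F(\tilde{\bar\vecw}(t+1)) - F(\vecw^*)\big],
\end{align*}
take expectations, and treat the two brackets separately. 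The second bracket is controlled verbatim by \Cref{lemma:converge_f_xbar}, which already supplies the first two terms of the claimed bound together with the $2L\sqrt{n}K_1$ and $2L^2\sqrt{n}K_2$ contributions.

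For the first bracket I would invoke the Lipschitz continuity of the global objective $F$ (with constant $L$) to get $\E[F(\tilde\vecw_i(t+1)) - F(\tilde{\bar\vecw}(t+1))] \leq L\,\E[\|\tilde\vecw_i(t+1) - \tilde{\bar\vecw}(t+1)\|]$. Since $\tilde\vecw_i(t+1) - \tilde{\bar\vecw}(t+1) = \frac{1}{S(t+1)}\sum_{k=0}^t \alpha(k+1)\big(\vecw_i(k+1) - \bar\vecw(k+1)\big)$, convexity of the norm (the triangle inequality applied to the weighted average) yields
\begin{align*}
\E\big[\|\tilde\vecw_i(t+1) - \tilde{\bar\vecw}(t+1)\|\big] \leq \frac{1}{S(t+1)}\sum_{k=0}^t \alpha(k+1)\,\E\big[\|\vecw_i(k+1) - \bar\vecw(k+1)\|\big].
\end{align*}
I would then bound a single column norm by the Frobenius norm, $\|\vecw_i(k+1) - \bar\vecw(k+1)\| \leq \|W(k+1) - \bar W(k+1)\|_F$, and reuse the intermediate accumulation estimate established inside the proof of \Cref{lemma:convergence_rate_x}, namely $\sum_{k=0}^t \alpha(k+1)\,\E[\|W(k+1)-\bar W(k+1)\|_F] \leq K_1\E[\|X(0)-\bar X(0)\|_F] + LK_2(1+\ln t)$. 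Dividing by $S(t+1) \geq \sqrt{t+1}$ gives the per-agent disagreement bound $\frac{1}{\sqrt{t+1}}\big(K_1\E[\|X(0)-\bar X(0)\|_F] + LK_2(1+\ln t)\big)$, notably with no factor of $\sqrt{n}$.

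Multiplying this by $L$ and adding it to the bound from \Cref{lemma:converge_f_xbar} combines the coefficients as $2L\sqrt{n}K_1 + LK_1 = L(2\sqrt{n}+1)K_1$ and $2L^2\sqrt{n}K_2 + L^2K_2 = L^2(2\sqrt{n}+1)K_2$, which is precisely the claimed inequality. The one point requiring care — and the main obstacle — is exactly this bookkeeping of constants: the averaged lemma carries a factor $\sqrt{n}$ because it sums the disagreement over all $n$ agents, whereas the single-agent term for a fixed $i$ must be routed through $\|\cdot\|_F$ and the accumulation estimate so that it contributes only a factor $1$; obtaining $2\sqrt{n}+1$ rather than $3\sqrt{n}$ depends on not applying \eqref{ineq:lemma_recursive2} directly to the single agent. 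Everything else is linearity of expectation together with the elementary estimate $S(t+1) \geq \sqrt{t+1}$ already used in the preceding proofs.
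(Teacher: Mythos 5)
Your proposal is correct and takes essentially the same route as the paper's own proof: both split $F(\tilde \vecw_i(t+1)) - F(\vecw^*)$ at the time-averaged mean $\frac{\sum_{k=0}^t \alpha(k+1)\bar{\vecx}(k)}{\sum_{k=0}^t \alpha(k+1)}$, bound the second piece by \Cref{lemma:converge_f_xbar}, and control the first piece via the subgradient-boundedness (Lipschitz) property together with the accumulation estimate behind \Cref{lemma:convergence_rate_x} divided by $S(t+1) \geq \sqrt{t+1}$, arriving at the same $(2\sqrt{n}+1)$ bookkeeping. Your write-up is merely more explicit than the paper's about the Jensen step on the weighted average and about routing the single-agent term through the Frobenius norm so that it carries no $\sqrt{n}$ factor, both of which the paper leaves implicit.
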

\begin{proof}
By the boundedness assumption of the subgradients we have 
\begin{align}
    \E[F(\tilde{\vecw}_i(t+1)) - F\left(\frac{\sum_{k=0}^t \alpha(k+1) \bar{\vecx}(k)}{\sum_{k=0}^t \alpha(k+1)}\right) &\leq \frac{L}{\sum_{k=0}^t \alpha(k+1)} \sum_{k=0}^t \alpha(k+1) \E[\|\vecw_i(t+1) - \bar{\vecx}(k)\|] \cr 
    &\leq \frac{L}{\sqrt{t+1}} (K_1\E[\|X(0)-\bar{X}(0)\|_F] + LK_2(1+\ln t))
    \nonumber
\end{align}
Using the above inequality and \Cref{lemma:converge_f_xbar} we get 
\begin{align}
    \E[F(\tilde \vecw_i(t+1)) - F(\vecw^*)] &\leq \frac{L}{\sqrt{t+1}} (K_1\E[\|X(0)-\bar{X}(0)\|_F] + LK_2(1+\ln t)) +\cr 
    & +\frac{n}{2} \frac{ \|\bar{\vecx}(0)-\vecw^*\| }{\sqrt{t+1}} + \frac{ L^2 (1+\ln(t+1)) }{2n \sqrt{t+1}}\cr 
    &+ \frac{2LK_1\sqrt{n}}{ \sqrt{t+1}} \E[\|X(0)-\bar{X}(0)\|_F] + 2L^2K_2n\frac{1+\ln t}{\sqrt{t+1}}. \cr 
    &= \frac{n}{2} \frac{ \|\bar{\vecx}(0)-\vecw^*\| }{\sqrt{t+1}} + \frac{ L^2 (1+\ln(t+1)) }{2n \sqrt{t+1}}\cr  
     &+ \frac{LK_1(2\sqrt{n}+1) }{\sqrt{t+1}} \E[\|X(0)-\bar{X}(0)\|_F] + {L^2K_2(2\sqrt{n}+1)}\frac{1+\ln t}{\sqrt{t+1}}. \nonumber
\end{align}
\end{proof}

\subsection{Discussion}
The subgradient method converges to the optimal at the rate of $O(\frac{\ln t}{\sqrt{t}})$. For randomized gossip, the convergence rate is comparable to the result that can be obtained from the result in \cref{thm:convergence_rate_f} from the result  in \cite[Theorem 2]{Nedic_2013}. However the approach in \cite{Nedic_2013} cannot be directly used to state the result in \cref{thm:convergence_rate_f} since the proof involves establishing inequality for every coordinate of the vector estimates and summing up the resulting inequalities. Such an approach cannot be extended to state-dependent averaging algorithms discussed in this work since the averaging step depends on the $\ell_2$ norm of the difference between the nodes' estimates and cannot be decoupled to establish result on individual coordinates. Another reason behind using the contraction factor approach is the lack of B-connectivity result for the interaction between the agents when using state-dependent averaging.

% where the dominant term in the bound on the convergence rate is given by 
% \begin{align*}
%     \frac{24d L^2 (1+\ln t)}{\delta (1-\lambda_1)\sqrt{t+1}} + \frac{L^2 (1+\ln (t+1))}{2n\sqrt{t+1}},
% \end{align*}
% where $\lambda_1$ is the second largest singular value of the expected averaging matrix for Randomized Gossip,  $\sigma_2(\E[A(t,X(t))])$. 

The hidden constant terms of the convergence rate, $O(\frac{\ln t}{\sqrt{t}})$, are influenced by the consensus algorithm used with the subgradient descent. In \Cref{thm:convergence_rate_f} the consensus step of the algorithms influences the convergence rate through the constants $K_1, K_2$ such that the convergence becomes faster as the constants decrease. Note that $K_1, K_2$ are upper bounds for $c_1(t), c_2(t)$ defined through \eqref{eq:c1_c2_def}. Based on  \cref{thm:maxEdgeContraction}, the contraction factor $\lambda =1- \frac{2\delta}{(n-1)\diam^2}$ is obtained in the following corollary, where $\delta$ for Randomized Gossip, Local Max-Gossip, Max-Gossip, and Load Balancing are provided through  \cref{prop:LMLBContracting}. 
\begin{corollary}\label{cor:constantsRate}
In \cref{thm:convergence_rate_f} the constants $K_1, K_2$ are given by $\frac{\rootl}{1-\rootl}$ which are bounded above by $n^2(n-1)\diam^2$ for Randomized Gossip, $n(n-1)\diam^2$ for Local Max-Gossip, $2(n-1)\diam^2 $ for Max-Gossip, and $(n-1)^3\diam^2 $ for Load Balancing being used as the averaging scheme with the subgradient method.
\end{corollary}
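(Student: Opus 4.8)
The plan is to observe that the entire corollary reduces to a single scalar estimate, since both $K_1$ and $K_2$ equal the same quantity $\frac{\sqrt{\lambda}}{1-\sqrt{\lambda}}$, and the four cases differ only through the value of $\delta$ entering $\lambda = 1 - \frac{2\delta}{(n-1)\diam^2}$ supplied by \Cref{thm:maxEdgeContraction}. Thus I would first derive one master bound on $\frac{\sqrt{\lambda}}{1-\sqrt{\lambda}}$ as a function of $\delta$, $n$, and $\diam$, and then substitute the four scheme-specific values of $\delta$ provided by \Cref{prop:LMLBContracting}, so that the only real content is one algebraic manipulation followed by bookkeeping.

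For the master bound, the key step is to rationalize the denominator: multiplying numerator and denominator by $1+\sqrt{\lambda}$ gives
\begin{align*}
    \frac{\sqrt{\lambda}}{1-\sqrt{\lambda}} = \frac{\sqrt{\lambda}\,(1+\sqrt{\lambda})}{1-\lambda}.
\end{align*}
Because the contraction factor satisfies $\lambda \in (0,1)$, we have $\sqrt{\lambda} < 1$, hence $\sqrt{\lambda}\,(1+\sqrt{\lambda}) < 2$, so the numerator is controlled by the constant $2$, while the denominator is exactly $1-\lambda = \frac{2\delta}{(n-1)\diam^2}$. Combining these,
\begin{align*}
    K_1 = K_2 = \frac{\sqrt{\lambda}}{1-\sqrt{\lambda}} \leq \frac{2}{1-\lambda} = \frac{(n-1)\diam^2}{\delta}.
\end{align*}

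It then remains to insert the values $\delta = 1/n^2$, $\delta = 1/n$, $\delta = 1/2$, and $\delta = 1/(n-1)^2$ for Randomized Gossip, Local Max-Gossip, Max-Gossip, and Load Balancing respectively, as given by \Cref{prop:LMLBContracting}; substituting each into $\frac{(n-1)\diam^2}{\delta}$ reproduces exactly the four claimed bounds $n^2(n-1)\diam^2$, $n(n-1)\diam^2$, $2(n-1)\diam^2$, and $(n-1)^3\diam^2$. I do not anticipate a genuine obstacle: the only point demanding care is ensuring $\lambda \in (0,1)$, which guarantees simultaneously that $\sqrt{\lambda}$ is real, that the denominator $1-\sqrt{\lambda}$ is strictly positive, and that the numerator bound $\sqrt{\lambda}\,(1+\sqrt{\lambda}) < 2$ holds. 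This is precisely what the contracting property with $\phi_t \in (0,1)$ together with the explicit form $\lambda = 1 - \frac{2\delta}{(n-1)\diam^2}$ in \Cref{thm:maxEdgeContraction} provides, so no additional hypotheses are needed.
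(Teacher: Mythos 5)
Your proof is correct and is essentially the paper's own argument: your rationalization $\frac{\sqrt{\lambda}}{1-\sqrt{\lambda}} = \frac{\sqrt{\lambda}(1+\sqrt{\lambda})}{1-\lambda} \leq \frac{2}{1-\lambda} = \frac{(n-1)\diam^2}{\delta}$ is exactly the inequality the paper states in the form $1-\sqrt{\lambda} \geq \frac{1}{2}(1-\lambda)$, and you then substitute the same four values of $\delta$ from \cref{prop:LMLBContracting}. The only difference is organizational—you derive a single master bound and plug in the scheme-specific $\delta$, while the paper repeats the same estimate case by case.
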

\begin{proof}
For Randomized Gossip, $1-\sqrt{\lambda} \geq \frac{1}{2}(1-\lambda) \geq \frac{1}{n^2(n-1)\diam^2}\geq \frac{1}{n^2(n-1)\diam^2}$. Therefore $K_1,K_2$ are bounded as $\frac{\rootl}{1-\rootl}\leq n^2(n-1)\diam^2.$ 

Similarly, for Local Max-Gossip we have $1-\rootl \geq \frac{1}{n(n-1)\diam^2}$ leading to $\frac{\rootl}{1-\rootl}\leq n(n-1)\diam^2$, for Max-Gossip we have $1-\rootl \geq \frac{1}{2(n-1)\diam^2}$ leading to $\frac{\rootl}{1-\rootl}\leq 2(n-1)\diam^2$, and for Load Balancing $1-\rootl \geq \frac{1}{(n-1)^3\diam^2}$ resulting in $\frac{\rootl}{1-\rootl}\leq (n-1)^3\diam^2$.
\end{proof}

\begin{remark}
We may also comment that the above result uses a conservative bound on the contraction factor $\lambda>0$. The values mentioned in \cref{cor:constantsRate} are upper bounds on the constants in the convergence rate. However, tighter bounds on the constants $K_1, K_2$ are possible. 
For Randomized Gossip, the contraction factor can be improved to the square of the second largest eigenvalue of the expected averaging matrix $\E[A(t,X(t))]$.  

In principle, in the proof of Theorem~5, for each of the state-dependent algorithm, such a contraction factor would depend on the sample path (past trajectory) of the dynamics. For example, when the consensus scheme used is Load Balancing, we know that for most practical purposes, when the nodes do not have multiple neighbors with maximal disagreement, the constant $\delta$ in \cref{prop:LMLBContracting} is even grater than $\frac{1}{2}$, more precisely, it is $\frac{C_e(X)}{2}$, where $C_e$ is the number of edges over which the exchange is taking place in the averaging step with the state estimate $X\in \R^{n\times d}$. With the improved $\delta$, the bound on the constants $K_1,K_2$ can be improved to $\frac{(n-1)\diam^2}{2C_e(X(t))}\leq \frac{(n-1)\diam^2}{2}$.

Similarly the bounds on the convergence rate for Local Max-Gossip can be improved by using tighter contraction factor for the averaging matrices. However as seen from \cite[Theorem 2]{ustebay2010greedy}, the contraction factor may take cumbersome form which cannot be readily used to establish better bounds on $c_1(t),c_2(t)$.

% Careful analysis of this true contraction is an open problem for future research.
The problem of finding useful convergence rate for state-dependent averaging is a non-trivial open problem. 
\end{remark}
\color{black}
% \end{document}

\section{Numerical Examples}\label{sec:numerical}
To illustrate our analytical results, we present a simulation of a distributed optimization problem where the local functions' subgradients are not restricted to be uniformly bounded. In particular, we look at the standard distributed estimation problem in a  sensor network setting with $n=180$ agents. Here, each agent $i \in \Vcal $ wants to estimate an unknown parameter $\theta_0$. 
Each node has access to a noisy measurement of the parameter $c_i = \theta_0 + n_i$, where $n_i$'s are independent, zero mean Gaussian random variables with variance $\sigma^2_i>0$. In this setting, the Maximum Likelihood (ML) estimator $\text{\cite[Theorem~5.3]{van2007parameter}}$ 
is the minimizer of the separable cost function $F(w) = \sum_{i=1}^n (w - c_i)^2/\sigma_i^2$. Note that this problem is a distributed optimization problem with the local cost function $f_i(w)=(w - c_i)^2/\sigma_i^2$. For the variance $\sigma_i^2$, we picked $1/\sigma_i^2$ independently  and uniformly over $(0,1)$. For each node $i \in [n]$, the initial local estimates $x_i(0)$ are drawn independently from a standard Gaussian distribution.

We consider the performance for different topologies of the underlying communication graph $\mathcal{G}$ ranging from dense graphs (Complete and Barbell),
moderately dense graphs (Erd\"{o}s-R\'enyi),
to sparse graphs (Line and Star).
We chose a connected graph with the edge probability $p=0.4$ for Erd\"{o}s-R\'enyi graph. For the Barbell graph, we chose equal number of nodes for the three components -- two Complete graphs and the connecting Line graph.

\begin{comment}
     \begin{figure}[t] 
\centering
\begin{subfigure}{0.20\textwidth}
    \centering
    \includegraphics[width=\textwidth]{}
    \subcaption{Complete graph}
\end{subfigure}
\begin{subfigure}{0.20\textwidth}
    \centering
    \includegraphics[width=\textwidth]{}
    \subcaption{Barbell graph}
\end{subfigure}
\begin{subfigure}{0.20\textwidth}
    \centering
    \includegraphics[width=\textwidth]{}
\subcaption{Erd\"{o}s-R\'enyi Graph}
\end{subfigure}
\begin{subfigure}{0.20\textwidth}
    \centering
    \includegraphics[width=\textwidth]{}
\subcaption{Lollipop Graph}
\end{subfigure}
\begin{subfigure}{.20\textwidth}
     \centering
    \includegraphics[width=\textwidth]{}
\subcaption{Ladder Graph}
\end{subfigure}
\begin{subfigure}{0.20\textwidth}
     \centering
    \includegraphics[width=\textwidth]{}
\subcaption{Line Graph}
\end{subfigure}
\begin{subfigure}[b]{0.20\textwidth}
     \centering
    \includegraphics[width=\textwidth]{}
\subcaption{Star graph}
\end{subfigure}
\caption{Graph Layouts.}
\label{fig:graphLayouts}
\end{figure}
\end{comment}

We ran the averaging-based subgradient optimizer with four different averaging update rules: Randomized Gossip \cite{boyd2006randomized}, Local Max-Gossip, Max-Gossip, and Load-Balancing. For the Randomized Gossip, at each time a node in $[n]$ wakes up uniformly at random, and it chooses one of its neighbors uniformly at random for communication. To account for the stochastic nature of Randomized Gossip and Local Max-Gossip algorithms we average the error values over $10$ runs keeping the initial conditions and samples at the nodes the same. The resulting plots in \cref{fig:experiment180}, show the decay of the error $\|\vecw(t) - w_*\one\|$ as a function of $t$, where $w_*=\sum_{i=1}^n \frac{c_i}{\sigma_i^2}/\sum_{i=1}^n \frac{1}{\sigma_i^2}$ is the optimal solution for $F(w)$. \av{For the Erd\"{o}s-R\'enyi communication graph, we also plot the decay of the error with the number of bits exchanged between the nodes in \cref{fig:er_plots} for Randomized Gossip, Local Max-Gossip, and Load-Balancing.} 

\av{In the simulation, $32$ bits are used for exchange of the estimates and $1$ bit is used for the exchange of each acknowledgement. Therefore, the number of bits exchanged per step for Randomized Gossip is $64$. For Local Max-Gossip, at time $t$ with $s(t)\in [n]$ being the randomly chosen node, $|\Ncal_{s(t)}| + 32 |\Ncal_{s(t)}| +  32 $ bits are exchanged for waking up the neighboring nodes, obtaining their values, and sending the neighbor with the maximum disagreement its own value. Finally, for Load Balancing, $ 32\sum_{i=1}^n |\Ncal_i| + n + {\text{ACK}}(t)$ bits are exchanged for sharing the values with the neighbors, sending request to the neighbour with the maximum disagreement, and sending the acknowledgement, where ${\text{ACK}}(t)$ is the total number of bits exchanged for sending the acknowledgement bits at time $t$.\footnote{In the numerical simulation, there are no cases with multiple neighbors with maximum disagreement.}}

\subsection{Comparison of Asynchronous Methods}
From \cref{fig:experiment180}, the performance of the subgradient methods using state-dependent averaging shows an improvement in convergence rate. The convergence rates increase as we go from Randomized Gossip, Local Max-Gossip, Max-Gossip to Load-Balancing averaging based optimizers. We will refer to the subgradient methods using the state-dependent averaging by their averaging algorithm in the succeeding discussion.  

In general, the performance of Max-Gossip is superior to the one of Local Max-Gossip. 
Clearly, Local Max-Gossip converges faster than Randomized Gossip. However convergence rate also depends on the graph topology: Local Max-Gossip applied on a Star graph has essentially the same rate as Randomized Gossip since the nodes at the periphery have only the central node as the choice to gossip with, and the probability of the first node being selected for gossiping is $n-1$ times larger to be a peripheral node as compared to the central node.
Overall, we notice the increase in the performance of Max-Gossip and Local Max-Gossip as compared to Randomized Gossip with increasing connectivity.  \av{Moreover, from \cref{fig:er_plots} we note the significantly better performance of Local Max-Gossip with respect to the number of exchanges between the nodes as opposed to that of synchronous Load-Balancing. }

\subsection{Max-Gossip vs. Load-Balancing}
When comparing different state-dependent averaging schemes, it should be noted that unlike gossip, Max-Gossip, and  Local Max-Gossip, Load-Balancing is a synchronous scheme where in addition to the max-edge, 
 other local max-edges are often incorporated in the averaging scheme simultaneously.  
Therefore, it is only natural that the convergence rate of Load-Balancing   is superior to that of Max-Gossip, since it averages not only the two nodes defined by the max-edge, but, additionally, other nodes connected by edges with large disagreement at the same time. 
By a similar logic, for the Complete graph, the performance of Load-Balancing   and Max-Gossip are the same since all the nodes are holding scalar estimates and due to the ordering between the estimates, all the nodes send their request for averaging to either the node with the maximum or minimum estimate resulting in  only the max-edge performing the updates.

We observe that the gap in performance of Load-Balancing   and Max-Gossip, which has the best performance amongst the discussed asynchronous methods, increases with the diameter of the graph. Characterizing the analytical dependence of convergence rate as a function of graph topology metrics is of interest for future work.

\begin{figure}[t] 
\centering
\begin{subfigure}{0.45\textwidth}
    \centering
    \includegraphics[width=\textwidth]{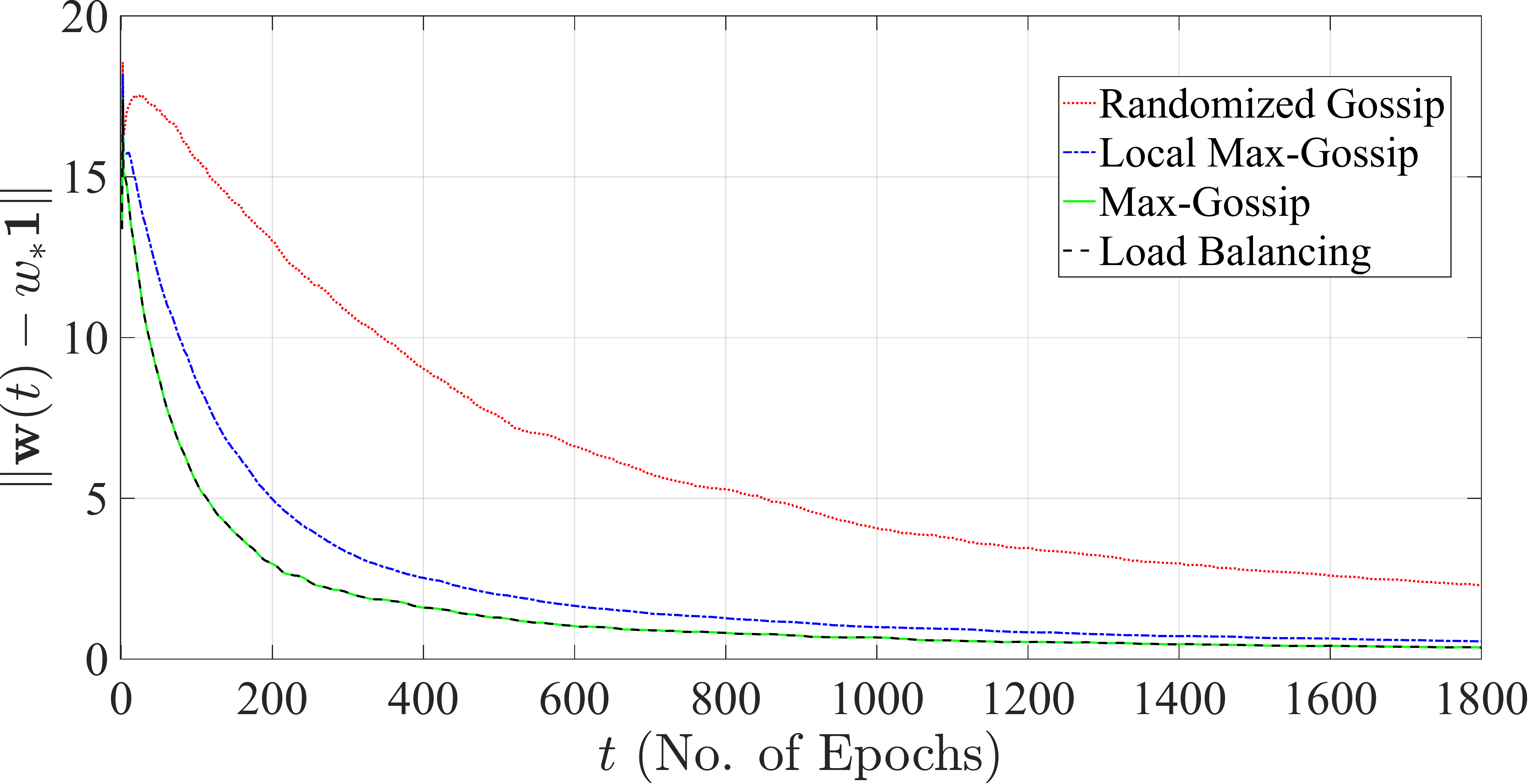}
    \subcaption{Complete Graph}
\end{subfigure}
\begin{subfigure}{0.45\textwidth}
    \centering
    \includegraphics[width=\textwidth]{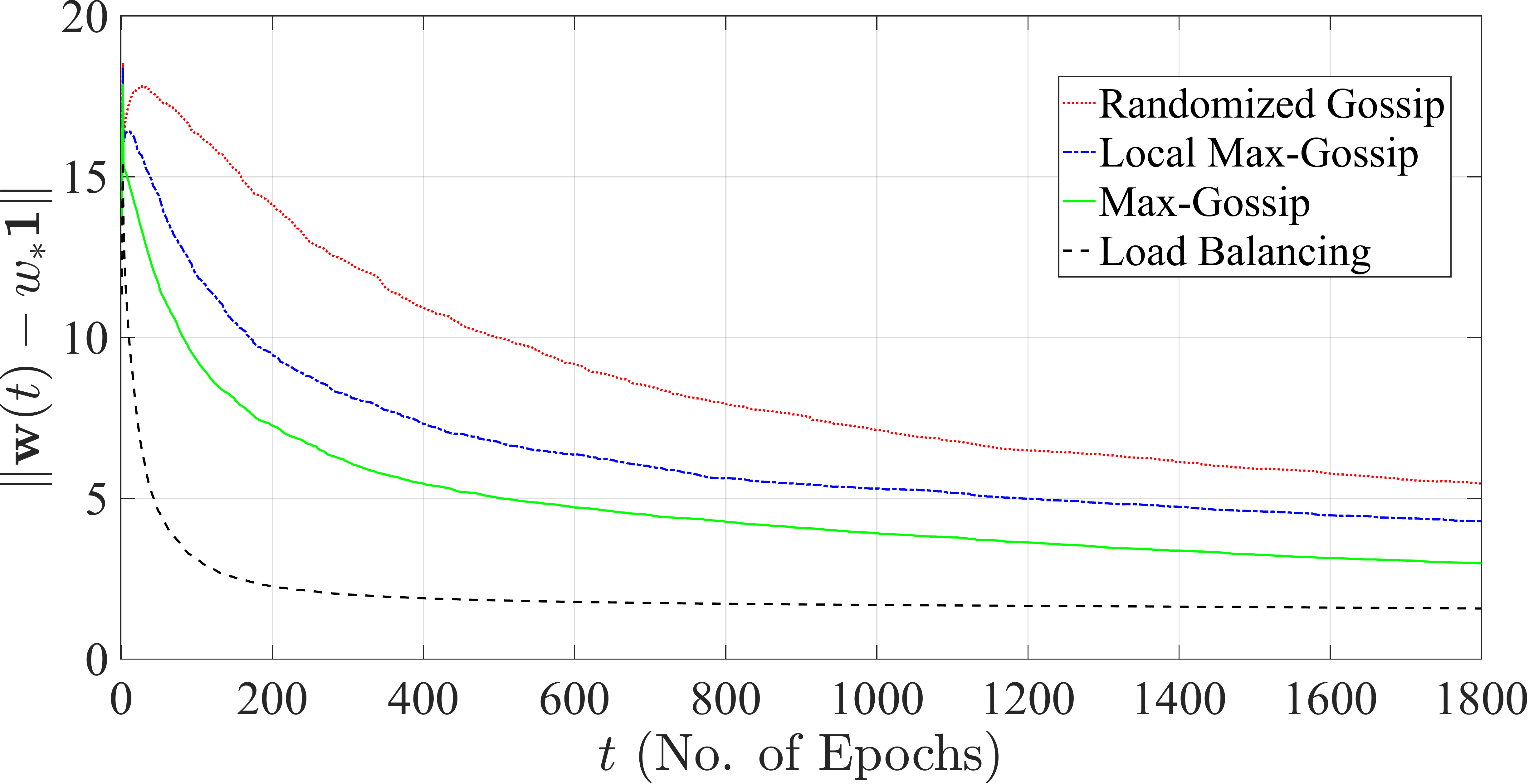}
    \subcaption{Barbell Graph}
\end{subfigure}
\begin{subfigure}{0.45\textwidth}
     \centering
    \includegraphics[width=\textwidth]{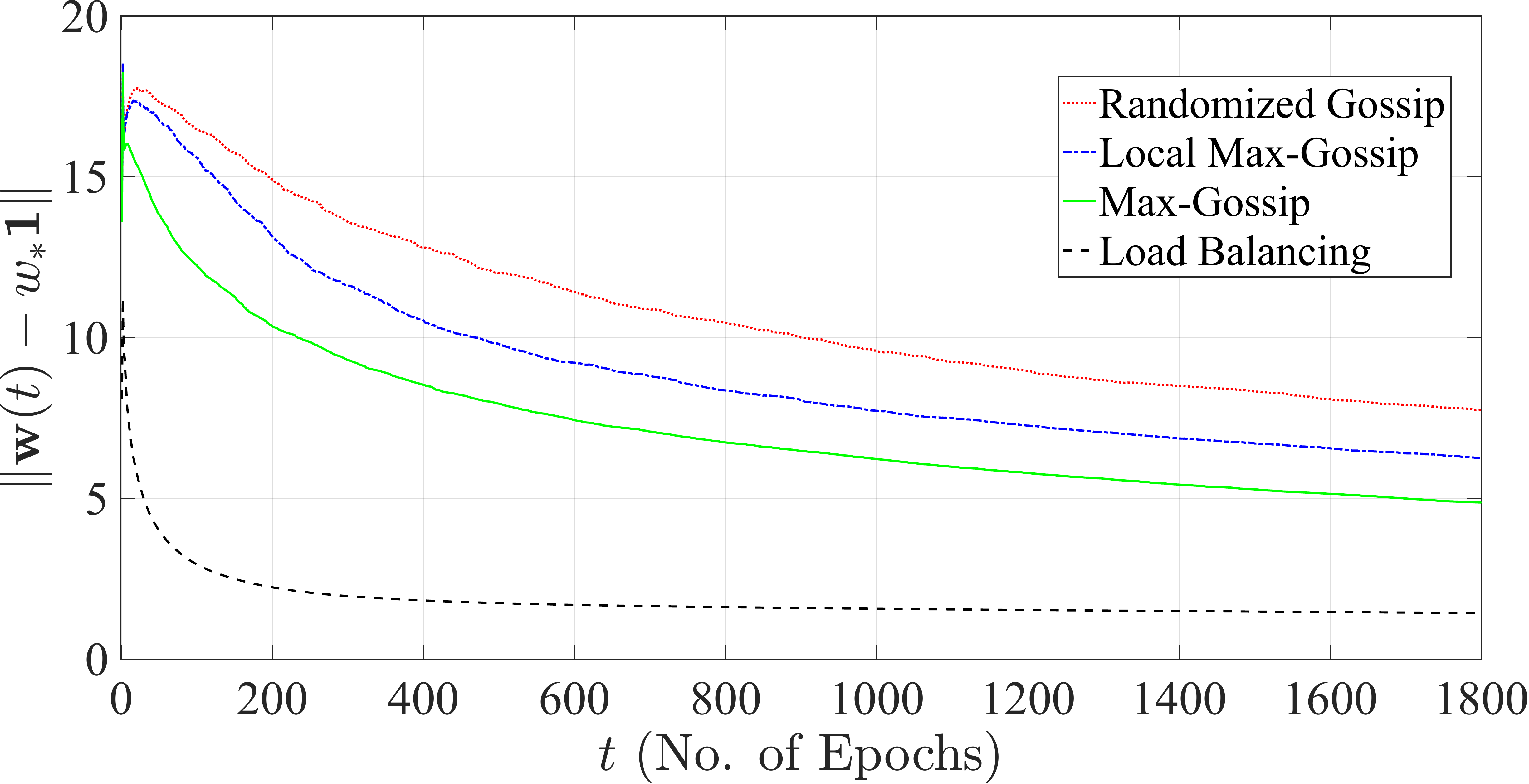}
\subcaption{Line Graph}
\end{subfigure}
\begin{subfigure}{0.45\textwidth}
     \centering
    \includegraphics[width=\textwidth]{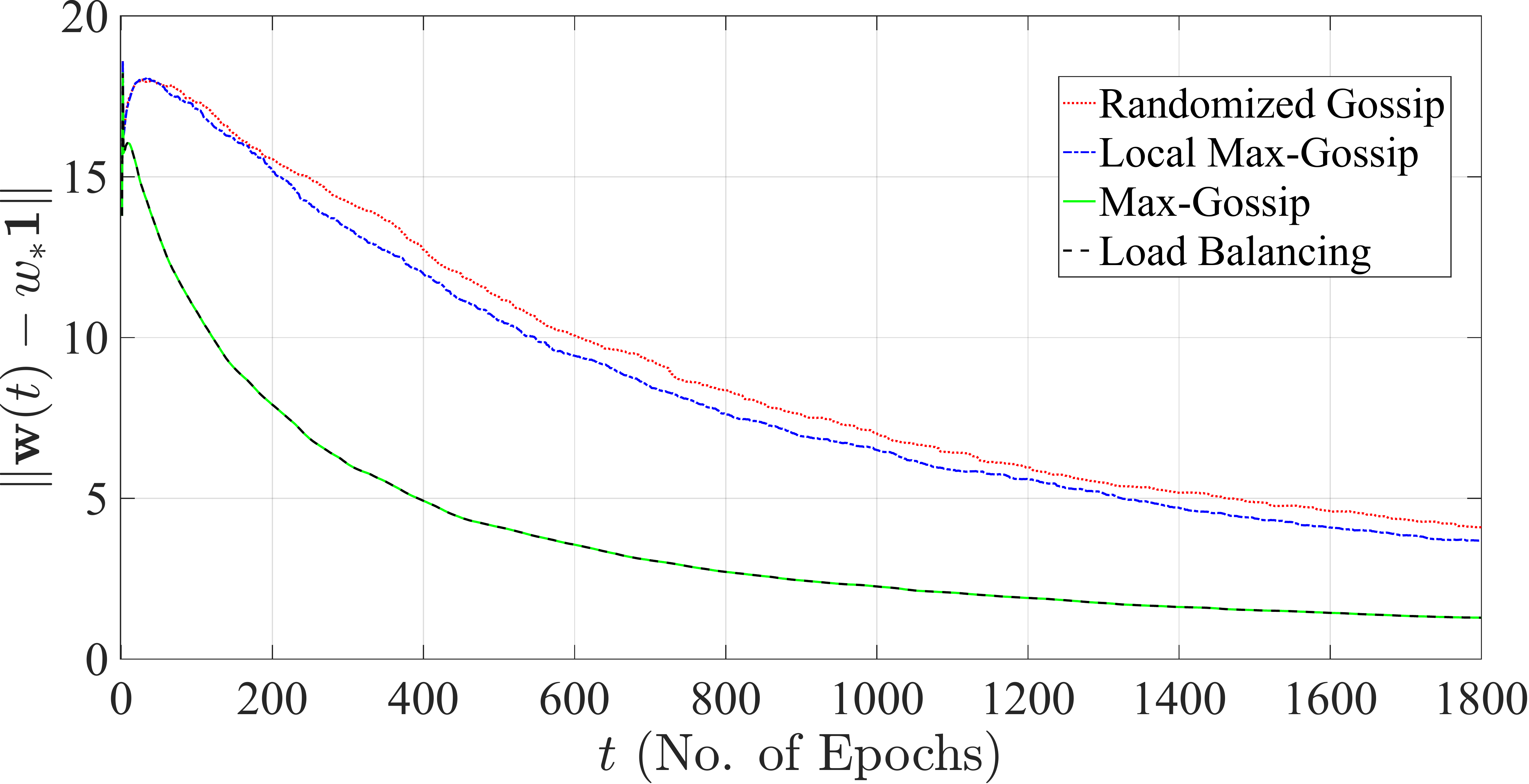}
\subcaption{Star Graph}
\end{subfigure}
\caption{Error decay for different graphs with $180$ nodes}
\label{fig:experiment180}
\end{figure}

\begin{figure}
    \centering
    \begin{subfigure}{0.45\textwidth}
    \centering
    \includegraphics[width=\textwidth]{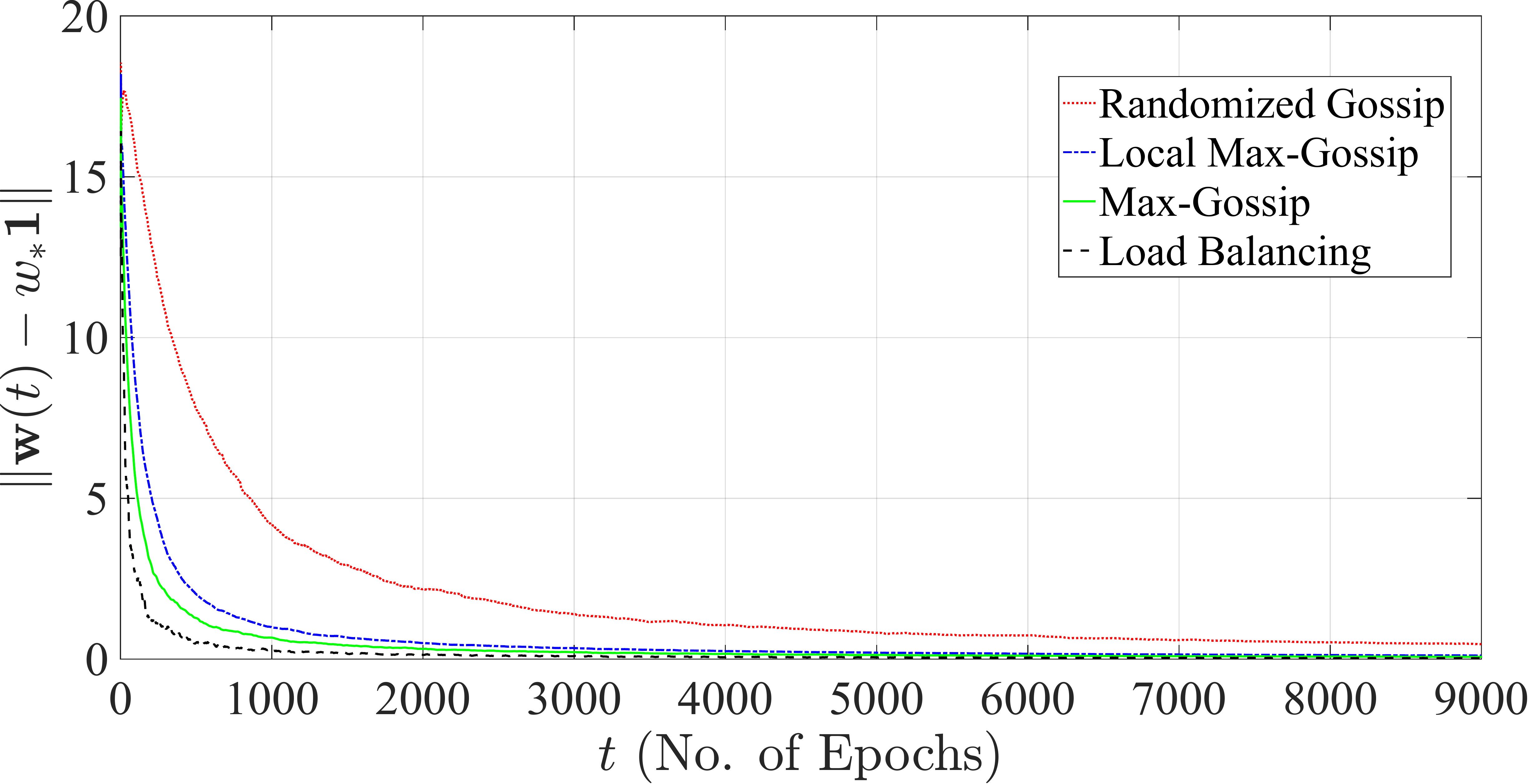}
\end{subfigure}
\begin{subfigure}{0.45\textwidth}
     \centering
    \includegraphics[width=\textwidth]{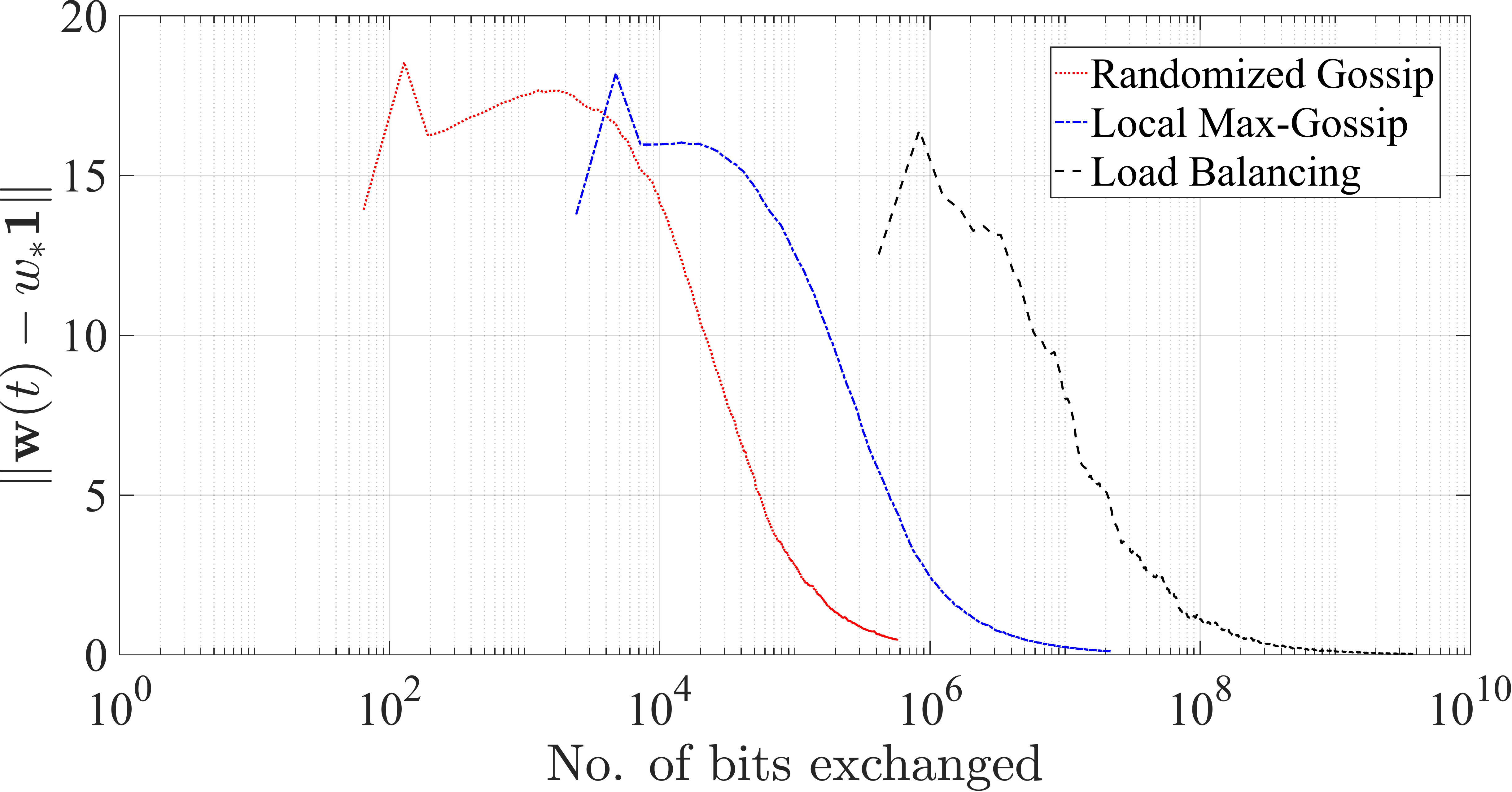}
\end{subfigure}
\caption{Error decay for Erd\"{o}s-R\'enyi Graph with $180$ nodes}
\label{fig:er_plots}
\end{figure}

\color{black}
\subsection{Logistic Regression}

In order to illustrate the applicability of the results to a more general high-dimensional convex problem, we look at an example of regularized logistic regression for classification over MNIST dataset containing $56000$ samples. In the experiment we train a model with the loss function defined as
\begin{align*}
    J(\vec{w},b) &= \frac{1}{m}\sum_{j=1}^m \left(-y_j\log\frac{1}{1+\exp(-(\vec{x_j}^T\vec{w})+b)} -(1-y_j)\log\frac{\exp(-(\vec{x_j}^T\vec{w})+b)}{1+\exp(-(\vec{x_j}^T\vec{w})+b)}\right) \cr 
&\qquad + \frac{1}{2m}\|\vec{w}\|^2+ \frac{1}{2m}|b|^2,
\end{align*}
where $\{(\vec{x}_j,y_j)\}_{j=1}^{56000}$ are the samples used for training.
The samples are used to classify the digits in MNIST dataset into two classes based on whether the digits are greater than or equal to $5$ or not. The experiment is run over a graph with $20$ nodes with each node containing the same number of samples from the dataset. We initialize the nodes with all zero vectors. 

The communication graph representing the underlying connection between the nodes is a ladder graph. We consider the performance for the averaging-based subgradient optimizer with Randomized Gossip, Local Max-Gossip, Max-Gossip, and Load Balancing. For Randomized Gossip, as in previous experiment, at each time a node in $[n]$ wakes uniformly at random and chooses one of its neighbor uniformly at random. We average the performance for Randomized Gossip and Local Max-Gossip over 3 runs. In \cref{fig:network_variance} we plot the network variance, $\|W(t) - \frac{\one\one^T}{n}W(t)\|_F^2$ for step-size $\alpha(t)=\frac{1}{t}$ for all $t\geq 1$. We observe that the decay in the loss of the function for the consensus-based subgradient method is similar to each other. However the decay of the network variance, defined as the sum of the square of the deviation of the state estimates from their mean, over time in decreasing order of speed is observed for Load Balancing, Max-Gossip, Local Max-Gossip, and finally Randomized Gossip.

\begin{figure}
\begin{subfigure}{0.5\textwidth}
    \centering
    \includegraphics[width=\textwidth]{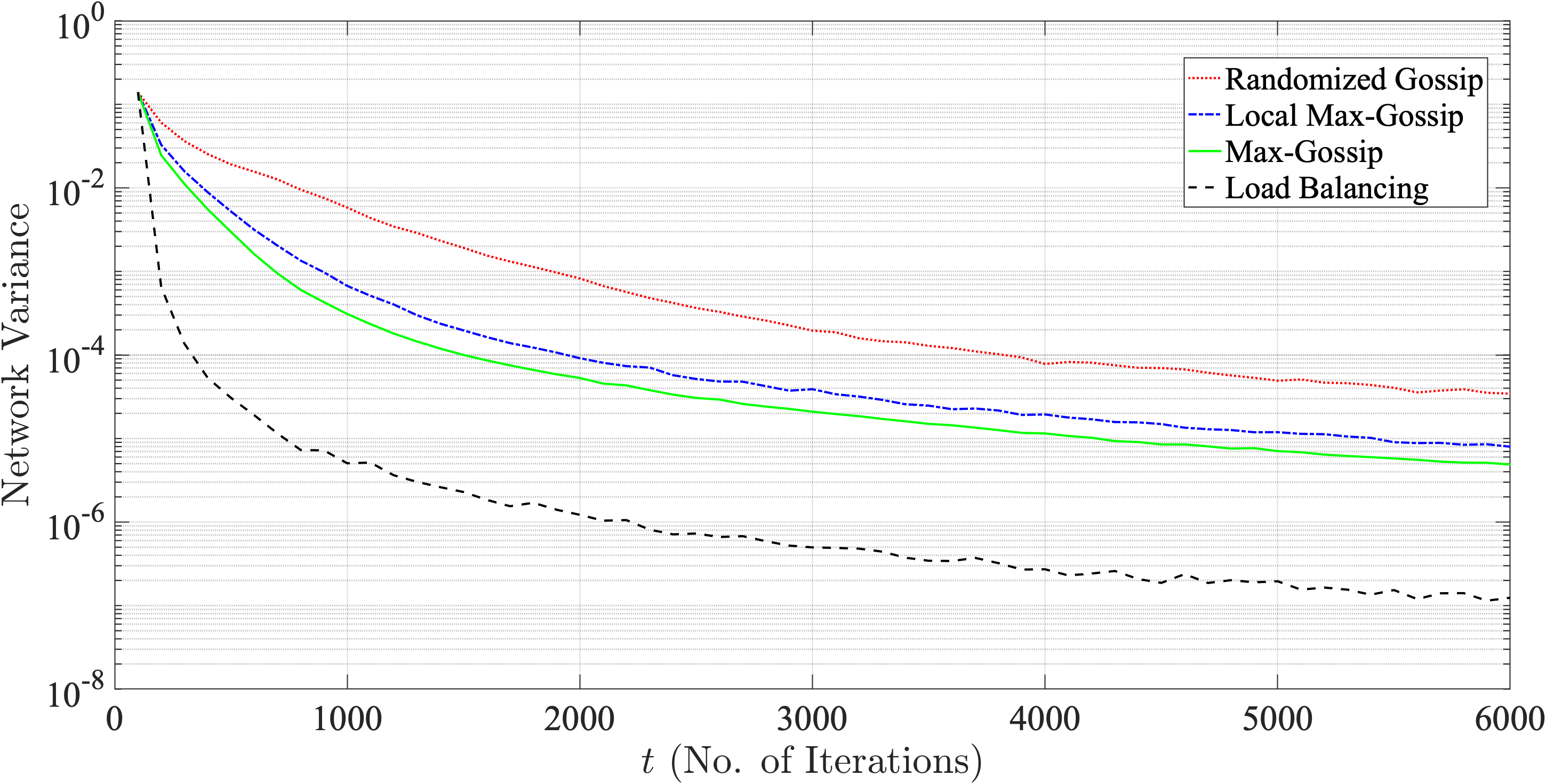}
\end{subfigure}
\begin{subfigure}{0.5\textwidth}
    \centering
    \includegraphics[width=\textwidth]{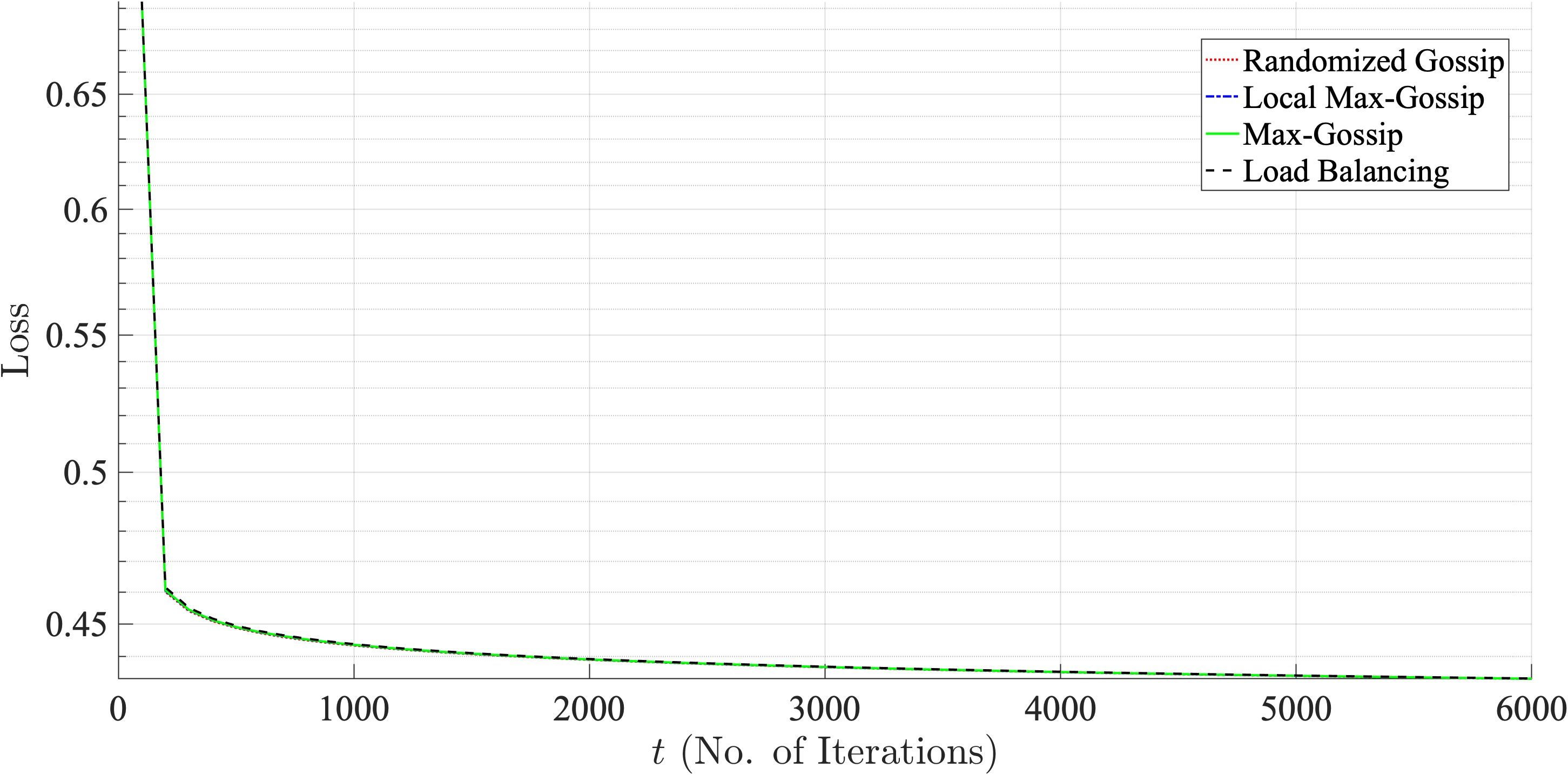}
\end{subfigure}
\caption{Network Variance for Ladder Graph with $20$ nodes}
\label{fig:network_variance}
\end{figure}

\color{black}
%%%%%%%%%%%%%%%%%%%%%%%%%%%

\section{Conclusions}\label{sec:conclude}
We proposed, studied, and analyzed the role of maximal dissent nodes in distributed optimization schemes, leading to many exciting state-dependent consensus-based subgradient methods. The proof of our result relies on a certain contraction property of these schemes. Our result opens up avenues for synthesizing or extending the use of state-dependent averaging-schemes for distributed optimization including the Max-Gossip, Local Max-Gossip, and Load-Balancing   algorithms.
Finally, we compared simulation results of a distributed estimation problem for gossip-based subgradient methods and the proposed state-dependent algorithms. Our numerical experiments show the faster convergence speed of schemes that use maximal dissent between nodes compared with state-independent gossip schemes. These simulations strongly support the intuition behind our main result, i.e., mixing of information between the maximal dissent nodes is critically important for the working (and enhancing) of the consensus-based subgradient methods. Although, we have shown the convergence of such state-dependent algorithms, establishing their rate of convergence, and especially relating them to various graph quantities such as diameter and edge density of the graph remains open problems for future research endeavors. {\color{black} The introduction of a state-dependent element for other class of algorithms specifically those which provide linear convergence rates such as distributed gradient tracking method \cite{qu2017harnessing,pu2021distributed} and their convergence analysis are part of future direction for the problem.}

%%%%%%%%%%%%%%%%%%%%%%%%%%%%%%%%%%%%%%%%%%%%%%%%%%%%%%%%%%%%%

\appendix

\section{Proof of \cref{prop:LBNecessity}} \label{appendix:LBNecessity}
\begin{proof}[Proof of \cref{prop:LBNecessity}]
For any $\omega\in \Omega$, consider $X(t;\omega) \in \R^{n\times d}$. If nodes $i$ and $j$ update their values to their average, that is $\big(\vecx_i(t;\omega) + \vecx_j(t;\omega)\big)/2$, then we know that during the round of Load-Balancing   algorithm starting at value $X(t;\omega)$ in step 2, node $i$ and node $j$ have sent their averaging request to each other. Therefore, we have $j \in \arg\max_{r \in \Ncal_i}\|\vecx_j(t;\omega) - \vecx_r(t;\omega)\|$ and $i \in \arg\max_{r \in \Ncal_j}\|\vecx_i(t;\omega) - \vecx_r(t;\omega)\|$. Hence, for any $\omega \in \Omega$,
\begin{equation}\label{eq:proofLBnecessity}\| \vecx_i(t;\omega) - \vecx_j(t;\omega) \| \geq  \max\Big\{\max_{r\in \Ncal_i\setminus\{j\}}\|\vecx_i(t;\omega) - \vecx_r(t;\omega)\|, \max_{r\in \Ncal_j\setminus\{i\}}\|\vecx_j(t;\omega) - \vecx_r(t;\omega)\|\Big\}.
\end{equation}

On the other hand, if \cref{eq:proofLBnecessity} holds  with strict inequality, then node $i$ and node $j$ send averaging requests only to each other in step 2 and respond to each other in step 3, and carry out their averaging according to step 4.
\end{proof}

\section{Proof of \cref{prop:LMLBContracting}}\label{appendix:LMLBContracting}

\begin{proof}
We first discuss the result for Randomized Gossip, Local Max-Gossip, and Max-Gossip averaging. The averaging matrices for the gossip algorithms where two agents update their states to their average takes the form of  \cref{eq:matrix_form_1}. Therefore, for these gossip algorithms we have 
\begin{equation}
    A\big(t,X(t)\big)^T \! \! A\big(t,X(t)\big) = A\big(t,X(t)\big) \nonumber
\end{equation}
and 
\begin{equation}
    \E\Big[A\big(t,X(t)\big)^T A\big(t,X(t)\big) \mid \Fcal_t \Big] = \E\Big[A\big(t,X(t)\big) \mid \Fcal_t \Big]. \nonumber
\end{equation}

Consider two nodes $ i,j \in \Vcal $ such that  $\{i,j\}\in \Ecal$. For Randomized Gossip, $\E\big[A\big(t,X(t)\big)_{ij} \mid \Fcal_t\big] = (P_{ij} +P_{ji})/2n.$ Moreover, since $\{i,j\} \in \Ecal$, we have $P_{ij},P_{ji}>0$. 
Let $P_* = \min_{\{i,j\}\in \Ecal} P_{ij}$. For the max-edge $\{i^*,j^*\},$ \cref{eqn:maxEdgeDelta} holds with ${\delta = P_*/n >0}$.
 
Let $i\in \Vcal$ and state estimate matrix $X(t)$. 
For Local Max-Gossip, let  $r_i\big(X(t)\big)$ be determined according to \cref{eqn:rsxt}. 
Consider the max-edge $\emax\big(X(t)\big)=\{i^*,j^*\}$. Then, 
$r_{i*}\big(X(t)\big) = j^*$ and $r_{j^*}\big(X(t)\big)=i^*$. Thus,
\begin{align}
    \E\Big[A\big(t,X(t)\big)_{i^*j^*} \mid \Fcal_t\Big]  =\frac{1}{n} \nonumber
\end{align}
and Local Max-Gossip averaging satisfies inequality \cref{eqn:maxEdgeDelta} with $\delta = 1/n.$

Similarly, for the Max-Gossip averaging with state estimate $X(t)$ at time $t$, for the max-edge $\emax\big(X(t)\big)=\{i^*,j^*\}$, we have
\begin{equation}
    \E\left[A\big(t,X(t)\big)_{i^*j^*}  \mid \Fcal_t \right] = \frac{1}{2}, 
    \nonumber
\end{equation}
and \cref{eqn:maxEdgeDelta}  holds with $\delta = 1/2$.

Let us now discuss the presence of max-edge in the Load-Balancing   averaging scheme. Consider the state estimate matrix $X(t)$  and
$\emax\big(X(t)\big) = \{i^*,j^*\}$ to be the max-edge with respect to $X(t)$. By the definition of a max-edge we know that nodes $i^*, j^*$ satisfy inequality \cref{eq:LBNecessity}. 

Consider the case when nodes $i^*,j^*$ satisfy \cref{eq:LBNecessity} with strict inequality. From \cref{prop:LBNecessity}, we know that $A(t,X(t))_{i^*j^*}, A\big(t,X(t)\big)_{j^*i^*}, A\big(t,X(t)\big)_{i^*i^*},  A\big(t,X(t)\big)_{j^*i^*}$ are equal to $1/2,$ which implies that $A\big(t,X(t)\big)_{i^*\ell}=A\big(t,X(t)\big)_{\ell j^*}=0$ for all $\ell \not \in \{i^*,j^*\}$. Therefore, \begin{equation}
    \E\Big[A\big(t,X(t)\big)^T \!\! A\big(t,X(t)\big) \mid \Fcal_t\Big]_{i^*j^*} = 1/2, \nonumber
\end{equation} and the inequality in \cref{eqn:maxEdgeDelta} holds with $\delta = 1/2$. 

Finally, consider the case when there are multiple neighbors of nodes $i^*, \,j^*$ with distance equal to $\|\vecx_{i^*}(t)-\vecx_{j^*}(t)\|$. Let $|\Scal_{i^*}|\geq 1$  and $|\Scal_{j^*}|\geq 1$ where  $\Scal_i$ is given by \cref{eq:max_dissent_set}.
Then, according to Load-Balancing   algorithm, nodes $i^*, \, j^*$ update their states to their average with probability $1/(|\Scal_{i^*}| \cdot |\Scal_{j^*}|)$. Since $|\Scal_{i^*}|\leq n-1$ and $|\Scal_{j^*}|\leq n-1$, we have 
\begin{equation}
    \E \Big[A\big(t,X(t)\big)^T \! \! A\big(t,X(t)\big)_{i^*j^*} \mid \Fcal_t \Big] \geq \frac{1}{2(n-1)^2}, \nonumber
    % \frac{1}{2|\Scal_{i^*}||\Scal_{j^*}|}.
\end{equation}
and \cref{eqn:maxEdgeDelta} holds with $\delta = 1/2(n-1)^2.$
\end{proof}

\section{Proof of Theorem~\ref{thm:maxEdgeContraction}}\label{appendix:edgecontraction}
To prove  \cref{thm:maxEdgeContraction} we must first define a few quantities related to the distance between the nodes on the graph and their relationships.
\begin{definition}\label{def:distances}
Consider a connected graph $\Gcal$ and a matrix $X = [\vecx_1,\dots,\vecx_n]^T \in \R^{n\times d}$ such that $\vecx_i \in \R^d$ is the %current solution 
estimate at node $i$ in the graph $\Gcal$. Let $d(X)$ denote the \underline{maximal distance} between the estimates of \textit{any} two nodes in the graph
\begin{equation}\label{eq:dist_any}
    d(X) \triangleq \max_{i,j \in \{1,2,\dots,n\}} \|\vecx_i - \vecx_j\|.
\end{equation}
Let $d_{\Gcal}(X)$ denote the maximal distance between the estimates among any two \textit{connected} nodes in the graph
\begin{equation}\label{eq:dist_edge}
    d_{\Gcal}(X) \triangleq \max_{\{i,j\} \in \Ecal} \|\vecx_i - \vecx_j\|.
\end{equation}
Finally, let  $\diam$ denote the \textit{longest shortest path} between any two nodes of the graph $\Gcal$.
\end{definition}

\begin{proposition}\label{prop:rel_d}
Given a connected graph $\Gcal$ and a matrix $X = [\vecx_1,\dots,\vecx_n]^T \in \R^{n\times d}$, such that $\vecx_i \in \R^d$ is the solution estimate at node $i$ in the graph $\Gcal$, we have 
\begin{equation}
    \frac{d(X)}{\diam} \leq d_{\Gcal}(X) \leq d(X). \nonumber
\end{equation}
\end{proposition}

\vspace{5pt}

\begin{proof}
The upper bound on $d_{\Gcal}(X)$ follows  from \cref{eq:dist_any,eq:dist_edge} in 
\cref{def:distances}.
To prove the lower bound on $d_{\Gcal}(X)$, we assume, without loss of generality, that the rows of the matrix $X \in \R^{n\times d}$ are such that 
$d(X) = \|\vecx_1 - \vecx_n\|$. 
Since $\mathcal{G}$ is connected, its diameter is finite and there is a path of length $k\leq \diam$, denoted by $\{v_0,v_1\},\{v_1,v_2\},\dots,\{v_{k-1},v_k\}$, where $v_0=1$ and $v_k=n$, with $v_i\in \Vcal$ for $i = 0,1,\ldots,k$. The distance $d(\vec{x})$ is bounded as 
\begin{equation}%{rCl}
%\end{IEEEeqna}
\label{eq:triangle_ineq}
    \|\vecx_1 - \vecx_n\| 
       \leq  \sum_{i=0}^{k-1} \|\vecx_{v_i} - \vecx_{v_{i+1}} \|, 
\end{equation}
where Eq.~\eqref{eq:triangle_ineq} follows from the triangle inequality. Finally, each term in the sum Eq.~\eqref{eq:triangle_ineq} is bounded above by $d_{\Gcal}(\vec{x})$. Hence, \begin{equation}
  d(X) \leq kd_{\Gcal}(X)\leq \diam d_{\Gcal}(X).  \nonumber
\end{equation}
\end{proof}

Next, we state a result quantifying the decrease in the Lyapunov function defined in \cref{eq:Lyapunov} that is the vector form of~\cite[Lemma~1]{nedic2009distributedquantization}. \begin{lemma}\label{lemma:DecreaseLyapunov}
Given a doubly stochastic matrix $A \in \R^{n\times n}$, let $c_{ij}$ denote the $(i,j)$-th entry of the matrix $A^TA$. Then for all $X=[\vecx_1,\dots,\vecx_n]^T \in \R^{n\times d}$, we have 
\begin{equation}
V(AX) = V(X) - \sum_{i<j} c_{ij} \|\vecx_i - \vecx_j\|^2. \nonumber
\end{equation}

\end{lemma}
\begin{proof}
By definition, the Lyapunov function in \cref{eq:Lyapunov} can be written as
\begin{equation}
    V(X) = \mathrm{tr}\big[(X-\bar{X})^T(X-\bar{X})\big],\nonumber
\end{equation}
where $\bar{X} = \frac{\one\one^T}{n}X$. The doubly stochasticity of $A$ implies
$\overline{AX}= \frac{\one \one^T}{n} AX = \frac{\one\one^T}{n}X = \frac{A\one\one^T}{n}X = A\bar{X}$. Therefore, 
\begin{align}
    V(AX) & = & \Tr[(AX- A\bar{X})^T(AX- A\bar{X})]. \nonumber
\end{align}
Finally,     
\begin{equation}
    V(X)-V(AX) = \Tr[(X- \bar{X})^T (I- A^TA)(X- \bar{X})]. \nonumber
\end{equation}

Since $A^TA$ is a symmetric and stochastic matrix, we have $c_{ij}= c_{ji}$ and $c_{ii} =1 - \sum_{i\neq j}c_{ij}$. Thus, % $A^TA$ can be expressed as 
\begin{equation}
    A^T A = I - \sum_{i<j}c_{ij}(\vec{b}_i - \vec{b}_j)(\vec{b}_i - \vec{b}_j)^T,\nonumber 
\end{equation}
where  $\vec{b}_i\in \mathbb{R}^n$  is the standard basis vector for all $i \in \Vcal$. 
Since 
\begin{equation}
\Tr[(X- \bar{X})^T(\vec{b}_i - \vec{b}_j)(\vec{b}_i - \vec{b}_j)^T(X-\bar{X})] = \|\vecx_i - \vecx_j\|^2, \nonumber
\end{equation}
we have 
\begin{align}
    V(X)- V(AX) = \sum_{i<j}c_{ij}\|\vecx_i - \vecx_j\|^2. \nonumber
\end{align}
\end{proof}

\begin{proof}[Proof of Theorem~\ref{thm:maxEdgeContraction}]
At time $t\geq 0$ consider the state estimate $X(t) = [\vecx_1(t), \dots, \vecx_n(t) ]^T \in \R^n$, the corresponding max-edge $\emax(X(t))=\{i^*,j^*\}$ and the doubly stochastic averaging matrix $A(t,X(t))$ such that
\begin{equation}
    \E[A(t,X(t))^T\!\! A(t,X(t))_{i^*j^*} \mid \Fcal_t] \geq \delta >0 \ \ \text{a.s.}  \nonumber
\end{equation}
Define $$\Omega_{\delta}(t) = \{\omega: \E[A(t,X(t))^T\!\! A(t,X(t))_{i^*j^*} \mid \Fcal_t] \geq \delta\}.$$ 
For legibility, we drop the time index in the variables for the rest of this proof and use $X$, $\Fcal$, $A(X)$, $\Omega_{\delta}$ instead of $X(t), \Fcal_t,  \Omega_{\delta}(t)$, and $A(t,X(t))$.

Using arguments similar to the ones from~\cite[Lemma~9]{nedic2009distributedaveraging}, for $X \in \Fcal$ and doubly stochastic matrix $A(X)$ such that
\begin{equation}
    \mathbb{E}[\big(A(X)^T \! \! A(X)\big)_{i^*j^*} \mid \Fcal]\geq \delta>0 \ \ \text{a.s.}, \label{eqn:AssumptionDecreaseInMaxEdge}
\end{equation} 
where $\Fcal$ is a $\sigma$-field, $X\in \Fcal$, and ${\emax(X) = \{i^*,j^*\}}$. We will show that  ${\mathbb{E}\big[V(A(X)X)\mid \Fcal\big] \leq \lambda V(X)}$ a.s. for some $\lambda \in (0,1)$. From \cref{lemma:DecreaseLyapunov}, the difference in the quadratic Lyapunov function $V$ evaluated at $X$ and $A(X)X$ is given by
\begin{equation}
    V(X) - V(A(X)X) = \sum_{i<j} c_{ij}(X)\|\vecx_i - \vecx_j\|^2, \nonumber
\end{equation}
where $c_{ij}(X)$ is the  $(i,j)$-th entry of $ A(X)^TA(X) $, {i.e.},  $c_{ij}(X) = (A(X)^T \! \! A(X))_{ij}$.
Taking the conditional expectation with respect to the filtration $\mathcal{F}$, we obtain 
\begin{align}
    V(X) &- \E\big[V(A(X)X) \mid \Fcal\big] \nonumber\\
    & =  \sum_{i<j}\big(\E[\big(A(X)^T \!\! A(X)\big)_{ij} \mid \Fcal]\big)\|\vecx_i - \vecx_j\|^2  \nonumber\\
    & \geq  c_{i^*j^*}(X) \|\vecx_{i^*} - \vecx_{j^*}\|^2  \geq \delta  \|\vecx_{i^*} - \vecx_{j^*}\|^2 \ \text{a.s.}, \nonumber
\end{align}
where  $\emax(X)= \{i^*,j^*\}$ and the first inequality follows from the non-negativity of the squared terms and the second inequality follows from \cref{eqn:AssumptionDecreaseInMaxEdge}.
Recall that the constant $\delta$ depends on the averaging scheme. 

If $V(X) = 0$, more precisely for the samples path characterized by $\omega\in \Omega_{\delta}(t)$ such that $V(X(t;\omega))=0$\footnote{We omit the dependency on $\omega$ and $t$ for legibility.}, then $X =\one \vec{c}^T$ for some $\vec{c} \in \R^{d}$. Therefore, $A(X)X = A(X)\one\vec{c}^T=\one\vec{c}^T$ since $A(X)$ is doubly stochastic and $V(A(X)X) = 0$. Thus, the inequality $\mathbb{E}\big[V(A(X)X)\mid \Fcal\big] \leq \lambda V(X)$ is satisfied. 

Let ${\Lcal \triangleq \{ \one \vec{p}^T \mid \vec{p}\in \R^d\}}$. For $X= [\vecx_1,\cdots,\vecx_n]^T \not\in \Lcal $, more precisely for the samples path characterized by $\omega\in \Omega_{\delta}(t)$ such that $X(t;\omega)\not\in\mathcal{L}$, the conditional expected fractional decrease in the Lyapunov function is 
\begin{equation}
    \frac{V(X)-\E[V(A(X)X) \mid \Fcal]}{V(X)} \geq \delta \frac{\|\vecx_{i^*} - \vecx_{j^*}\|^2}{\sum_{i=1}^n \|\vecx_i - \bar{\vecx}\|^2},  \nonumber
\end{equation}
where $\bar{\vecx} = \frac{1}{n}\sum_{i=1}^n \vecx_i$. Using the definition of $d_{\Gcal}(X)$ and Proposition~\ref{prop:rel_d}, we obtain the following bound
\begin{equation}
     \frac{V(X)-\E[V(AX)\mid \Fcal]}{V(X)} \geq \frac{\delta}{\diam^2}\frac{d^2(X)}{\sum_{i=1}^n \|\vecx_i - \bar{\vecx}\|^2} \label{eq:Vt_ratio_1}. \nonumber
\end{equation}

For $X\not\in \mathcal{L}$, let
\begin{equation}
    g(X)\triangleq \frac{d^2(X)}{\sum_{i=1}^n \|\vecx_i - \bar{\vecx}\|^2}.\nonumber
\end{equation} 
Note that $g(X)$ satisfies the following invariance relations 
\begin{equation}
    g(X + \one\vec{p}^T)=g(X), \ \ \vec{p} \in \R^d,  \nonumber
\end{equation}
 and 
 \begin{equation}
      g(cX)=g(X), \ \ c \in \R\setminus\{0\}. \nonumber
 \end{equation}
Therefore, for $X\not\in \Lcal$ the following inequality and identity hold
\begin{align}
%    g(X) &=&\frac{d^2(X)}{\sum_{i=1}^n \|\vecx_i - \bar{\vecx}\|^2}\\
g(X)& \geq & \min_{Z \in \mathbb{R}^{n\times d} : \sum_i \vec{z}_i =
    \vec{0}}\frac{d^2(Z)}{\sum_{i=1}^n \|\vec{z}_i\|^2} \nonumber \\
   &=& \min_{Z \in \mathbb{R}^{n\times d} :  \sum_i \vec{z}_i = \vec{0}; \sum_{i} \|\vec{z}_i\|^2 =1}{d^2(Z)}. \nonumber
\end{align}
Note that if $\sum_{i=1}^n \vec{z}_i=\vec{0}$ and $\sum_{i=1}^n \|\vec{z}_i\|^2=1$, then we have
\begin{equation}
    \sum_{1\leq i <j \leq n} \inp{\vec{z}_i}{\vec{z}_j} %= 
    =  - \frac{1}{2}\sum_{i=1}^n \|\vec{z}_i\|^2  = -\frac{1}{2}. \label{eq:inpValue} 
\end{equation}
By definition, $d(Z) \geq \|\vec{z}_i - \vec{z}_j \|$ for all $i,j \in [n]$. Using the fact that the maximum of a set of values is greater than its average for the set $\{\|\vec{z}_{i} -\vec{z}_j\|^2\}_{1\leq i < j \leq n}$, we get
\begin{align}
    d^2(Z) &\geq& \frac{2}{n(n-1)} \sum_{1 \leq i < j \leq n} \|\vec{z}_i - \vec{z}_j\|^2  =
     \frac{2}{n-1}, \nonumber
\end{align}
where the last step follows from \cref{eq:inpValue} and the fact that $\sum_{i=1}^n \|\vec{z}_i\|^2 = 1$. Finally, using  \cref{eq:Vt_ratio_1}, we get 
\begin{equation}
     \frac{V(X)-\E\big[V(A(X)X) \mid \Fcal\big]}{V(X)} \geq \frac{2\delta}{(n-1) \diam^2}.  \nonumber
\end{equation}
Since $\mathbb{E}\big[V(A(X)X)\mid \Fcal\big] \leq \lambda V(X)$ for $X \in \Lcal$ and for $X \not\in \Lcal$, we have $\mathbb{E}\big[V(A(X)X)\mid \Fcal\big] \leq \lambda V(X)$ a.s.
Thus,
\begin{equation}
    \E\big[V(A(t,X(t))X(t)) \mid \Fcal_t\big] \leq \lambda V\big(X(t)\big) \ \ \text{a.s.}, \nonumber
\end{equation}
where $\lambda =1- 2\delta/\big((n-1)\diam^2\big).$
\end{proof}
\section{Limiting properties of the Lyapunov function $V(\cdot)$}\label{appendix:distfrommean}
To prove \cref{lemma:dist_from_mean} we will make use of the following result.

\begin{theorem}[{Robbins-Siegmund Theorem}]\label{thm:R-S}
Let $(\Omega,\Fcal,\mathcal{P})$ be a probability space and ${\Fcal_0\subseteq \Fcal_1 \subseteq \cdots}$ be a sequence of sub $\sigma$-fields of $\Fcal$. Let $\{u_t\},\{v_t\},\{q_t\}$, and $\{w_t\}$ be $\Fcal_t$-measurable random variables, where $\{u_t\}$ is uniformly bounded from below, and $\{v_t\}$, $\{q_t\}$, and $\{w_t\}$ are non-negative. Let $\sum_{t=0}^{\infty} w_t<\infty,\  \sum_{t=0}^{\infty} q_t <\infty $ and 
\begin{align}
    \E [u_{t+1} \mid \Fcal_t] \leq (1+q_t)u_t -v_t +w_t, \ \ \text{a.s.,} \nonumber
\end{align}
for all $t\geq 0$. Then, the sequence $\{u_t\}$ converges and $\sum_{t=0}^{\infty} {v_t} < \infty$ a.s.
\end{theorem}

\begin{proof}[Proof of Lemma~\ref{lemma:dist_from_mean}] 
To study the convergence of $V\big(W(t)\big)$, we first derive a super-martingale like inequality for the stochastic process $\big\{V\big(W(t)\big)\big\}$.
For $X(t)\in \Fcal_t$ using the contracting averaging property  of $A(t,X(t))$ in \cref{eq:defContractingMatrix},  we get
\begin{align}
    \E\big[V\big(W(t+1)\big)\mid \Fcal_t\big] &= \E\big[V\big(A(t,X(t))X(t)\big) \mid \Fcal_t\big] \cr 
    &\leq  \lambda V\big(X(t)\big), \ \ \text{a.s.}, \label{eqn:proofContraction}
\end{align}
where $\lambda \in (0,1)$. We know that $X(t) = W(t) + \e(t)$, so from triangle inequality on $\|W(t)-\bar{W}(t)+E(t)-\bar{E}(t)\|_F$ we have
\begin{equation}\label{eq:triangle_ineq_VX}
    V\big(X(t)\big) \leq V\big(W(t)\big) + V\big(\e(t)\big) + 2\sqrt{V\big(W(t)\big)}\sqrt{V\big(\e(t)\big)}. 
\end{equation}
    
Using the inequality above in \cref{eqn:proofContraction}, for all $t\geq 0$ we get  
\begin{equation}
    \E\big[V\big(W(t+1)\big)\mid \Fcal_t \big] \leq \lambda \bigg(V\big(W(t)\big) + V\big(\e(t)\big)  + 2\sqrt{V\big(W(t)\big)}\sqrt{V\big(\e(t)\big)}\bigg) \ \ \text{a.s.} \nonumber
\end{equation}
Since $V\big(\e(t)\big) = \|\e(t)-\bar{\e}(t)\|_F^2 \leq \|\e(t)\|_F^2 \leq L^2\alpha^2(t)$, we get 
\begin{align}
     \E[V\big(W(t+1)\big)\mid \Fcal_t] &\leq \lambda \left(\sqrt{V\big(W(t)\big)} + L\alpha(t)\right)^2 \ \ \text{a.s.}\nonumber%\label{eq:ineqSquare2}
\end{align}
From Jensen's inequality, we have
\begin{equation}
    \E\Big[\sqrt{V\big(W(t+1)\big)} \mid \Fcal_t \Big] \leq  \sqrt{\E\big[V\big(W(t+1)\big) \mid \Fcal_t\big]}
     \leq  \sqrt{\lambda}\Big(\sqrt{V\big(W(t)\big)} + L\alpha(t)\Big) \ \ \text{a.s.} \nonumber
\end{equation}

Taking the expectation, multiplying by $\alpha(t+1)$ and using the fact that $\{\alpha(t)\}$ is non-increasing, we get
\begin{equation}
    \alpha(t+1) \E \big[\sqrt{V\big(W(t+1)\big)}\big] 
    \leq\alpha(t)\E\sqrt{V\big(W(t)\big)} 
      - (1-\sqrt{\lambda})\alpha(t)\E\sqrt{V\big(W(t)\big)} + \alpha^2(t) \ \ \text{a.s.} \nonumber
\end{equation}
Since the diminishing step sequence $\{\alpha(t)\}$ satisfies $\sum_{t=1}^{\infty}\alpha^2(t)<\infty$, \cref{thm:R-S} results in
\begin{equation}%\label{eq:summableRootV}
    \sum_{t=1}^{\infty}\alpha(t)\E\sqrt{V\big(W(t)\big)}<\infty, \ \  \nonumber%\text{almost surely},
\end{equation}
and by the Monotone Convergence Theorem, we have,
\begin{equation}\label{eq:MCTapplied}
    \E\left[\sum_{t=1}^{\infty}\alpha(t)\ \sqrt{V\big(W(t)\big)}\right]<\infty,
\end{equation}
which implies that
\begin{equation}
    \sum_{t=1}^{\infty}\alpha(t)\ \sqrt{V\big(W(t)\big)}<\infty, \ \text{a.s.} \nonumber
\end{equation}  
Since $V(W(t)) = \sum_{i=1}^n \|\vecw_i(t)-\bar{\vecw}(t)\|^2$, we know that 
\begin{align}
    \sum_{t=1}^{\infty} \alpha(t)\|\vecw_i(t) - \bar{\vecw}(t)\| \leq \sum_{t=1}^{\infty}\alpha(t)\sqrt{V\big(W(t)\big)}<\infty, \nonumber
\end{align}
for all $i\in [n]$, a.s.
Since $\sum_{t=1}^{\infty} \alpha(t)\|\vecw_i(t)-\bar{\vecw}(t)\| <\infty$ and $\sum_{t=1}^{\infty}\alpha(t)=\infty $, we have
\begin{equation}\label{eq:liminfw}
    \liminf_{t\to\infty} \|\vecw_i(t)-\bar{\vecw}(t)\| = 0, \ \forall i \in [n], \ \  \text{  a.s.}
    \end{equation}
Further since we have,
\begin{equation}
    \sum_{t=1}^{\infty}\alpha(t)\E\sqrt{V\big(W(t)\big)} = \E\left[\sum_{t=1}^{\infty}\alpha(t)\E\left[\sqrt{V\big(W(t)\big)} \mid \Fcal_t\right]\right], \nonumber
\end{equation}
using Monotone Convergence Theorem similar to \cref{eq:MCTapplied} implies that 
\begin{equation}%\label{eq:final1}
    \E\left[\sum_{t=1}^{\infty}\alpha(t)\E\left[ \|\vecw_i(t) - \bar{\vecw}(t) \| \mid \Fcal_t\right]\right]<\infty, \nonumber
\end{equation}  
and so, we  have
\begin{equation}
    \sum_{t=1}^{\infty}\alpha(t)\E\left[\sqrt{V\big(W(t)\big)} \mid \Fcal_t\right]<\infty \ \ \text{a.s.}, \nonumber
\end{equation}  
and therefore,
\begin{equation}\label{eq:final1}
    \sum_{t=1}^{\infty}\alpha(t)\E\left[ \|\vecw_i(t) - \bar{\vecw}(t) \| \mid \Fcal_t\right]<\infty, \ \forall i \in [n], \ \text{a.s.}
\end{equation}  
Further, for all $t\geq 0$, we know 
\begin{equation}
    \E\big[V\big(W(t+1)\big) \mid \Fcal_t\big]
    \leq  \lambda \left(V\big(W(t)\big) + 2L\alpha(t)\sqrt{V\big(W(t)\big)} +L^2\alpha^2(t)\right) \ \text{a.s.} \nonumber
\end{equation}
Since we have
\begin{equation}
    \sum_{t=1}^{\infty} 2\alpha(t)\sqrt{V\big(W(t)\big)} + \lambda L^2\alpha^2(t)<\infty \quad \textrm{a.s.}, \nonumber
\end{equation}
\Cref{thm:R-S} implies that $    \{V(W(t))\} \   \text{converges a.s.}$
 Therefore, 
 \begin{equation}
     \|\vecw_i(t+1) - \bar{\vecw}(t+1) \|  \ \text{converges,} \ \forall i \in [n],\ \ \textrm{a.s.} \nonumber
 \end{equation} 
Using \eqref{eq:liminfw} with the above result, we get
 \begin{equation}\label{eq:final2}
    \lim_{t\to\infty} \|\vecw_i(t+1) - \bar{\vecw}(t+1)\| = 0, \ \forall i\in[n],\ \ \text{a.s.}
 \end{equation}

Finally, since $\bar{\vecw}(t+1)^T = \frac{\one^TW(t+1)}{n} = \frac{\one^TA(t,X(t))X(t)}{n}$ from the double stochasticity of $A(t,X(t))$, we have 
\begin{equation}
    \bar{\vecw}(t+1)^T=\frac{\one^TX(t)}{n} = \bar{\vecx}(t)^T, \nonumber
\end{equation}
which from \cref{eq:final1,eq:final2} implies  \cref{lemma:dist_from_mean}.
\end{proof}

%%%%%%%%%
\section{Proof of Lemma~\ref{lemma:convex}} \label{appendix:lemmaConvex}
To prove Lemma~\ref{lemma:convex}, we follow the proof in~\cite[Theorem~1]{aghajan2020distributed}.
\begin{proof}
For all $\vecx \in \mathcal{X}^*$ and $t \geq 0$, we have
\begin{equation}
    \E\big[\|\vecx_{t+1} - \vecx \|^2 \mid \Fcal_t \big]  
    \leq (1+b_t)\|{\vecx}_t - \vecx\|^2 - a_t\big(f(\vecx_t) - f(\vecx)\big) +c_t \ \text{a.s.} \nonumber%\label{eq:iterate_template} 
\end{equation}

For any $\vecx  \in \mathcal{X}^*$, \Cref{thm:R-S} 
implies that
${\{\|\vecx_t-\vecx\|\} \ \text{converges}}$    
and
\begin{equation}
    \sum_{t=0}^{\infty} a_t\big(f({\vecx}_t)-f(\vecx)\big)<\infty \ \ \text{a.s.} \nonumber
\end{equation}
Since for any $\vecx\in \mathcal{X}^*$ we have $f(\vecx) = f^*$, the event
\begin{equation}
    \Omega_\vecx = \bigg\{
         \omega :   \lim_{t\to \infty} \|\vecx_t(\omega) - \vecx\| \textrm{ exists, and }   \sum_{t=0}^{\infty} a_{t}(f(\vecx_t(\omega))-f^*)<\infty   \bigg\} \nonumber
\end{equation}
is such that 
$\mathbb{P}(\Omega_\vecx) = 1$. 
Note that here we denote by $\{{\vecx}_t(\omega)\}_{t\geq 0}$ the sample path for the corresponding $\omega$.

Let $\mathcal{X}_d^* \subseteq \mathcal{X}^*$ be a countable dense subset of $\mathcal{X}^*$ and ${\Omega_d = \bigcap_{\vecx\in \mathcal{X}_d^*} \Omega_\vecx.}$ We have $\mathbb{P}(\Omega_d) = 1$ since $\mathcal{X}_d^*$ is countable. For any $\omega \in \Omega_d$, since $\sum_{t=0}^{\infty} a_t = \infty$ and $\sum_{t=0}^{\infty} a_t\big(f({\vecx}_t(\omega)-f^*\big)<\infty $, we have
    \begin{equation}\label{eqn:fliminf}
        \liminf_{t\to \infty} f\big({\vecx}_t(\omega)\big) = f^*.
    \end{equation}

From \cref{eqn:fliminf} and the continuity of $f$, for all $\omega \in \Omega_d$, we have
\begin{align}
    \liminf_{t \to \infty} \|\vecx_t(\omega) - \vec{x}^*(\omega)\|=0, \nonumber%\label{eq:xvconv}    
\end{align}
for some $\vec{x}^*(\omega) \in \mathcal{X}^*$\footnote{$\vec{v}^*(\omega)$ may not be in $\mathcal{X}^*_d$.}. Consider a subsequence $\{\vecx_{t_k}(\omega)\}_{k\geq0}$ of $\{\vecx_t(\omega)\}_{t\geq 0}$ such that 
\begin{equation}
    \lim_{k \to \infty} f\big(\vecx_{t_k}(\omega)\big) = f^*. \nonumber 
\end{equation} 

For any $\omega \in \Omega_d$, $\lim_{t \to \infty} \|\vecx_t(\omega)-\hat{\vecx}\|$ exists for $\hat{\vecx} \in \mathcal{X}_d^*$. Therefore, the sequences $\{\vecx_t(\omega)\}_{t\geq0}$ are bounded.
Hence, $\{\vecx_{t_k}(\omega)\}_{k\geq 0}$ is also bounded, has a limit point $\vec{x}^*(\omega) \in \mathcal{X}^*$, and without loss of generality, 
\begin{equation}
    \lim_{k \to \infty} \vecx_{t_k}(\omega) = \vecx^*(\omega). \nonumber
\end{equation} 
Since $\mathcal{X}_d^*$ is dense, there is a sequence $\{\vec{q}_s(\omega)\}_{s\geq0}$ in $\mathcal{X}_d^*$ such that 
\begin{equation}
    \lim_{s\to \infty} \|\vec{q}_s(\omega)-\vec{x}^*(\omega)\| = 0. \nonumber
\end{equation}
    
For $\omega \in \Omega_d$, $\lim_{t\to \infty} \|\vecx_t(\omega) - \vec{q}_s(\omega)\|$ exists for all $s\geq 0$, which is $\|\vecx^*(\omega) - \vec{q}_s(\omega)\|$. 
Moreover,
\begin{equation}
    \lim_{t\to\infty} \|\vecx_t(\omega) - \vec{q}_s(\omega)\| 
    \leq  \liminf_{t\to\infty} \|\vecx_t(\omega) - \vecx^*(\omega)\|  + \| \vecx^*(\omega)- \vec{q}_s(\omega)\| \leq  \| \vecx^*(\omega)- \vec{q}_s(\omega)\|, \nonumber
\end{equation}
which implies that
\begin{equation}
    \lim_{s\to \infty}\lim_{t\to \infty} \|\vecx_t(\omega) - \vec{q}_s(\omega)\| = 0. \nonumber
\end{equation}
Finally,
\begin{equation}
    \limsup_{t \to \infty} \|\vecx_t(\omega) - \vecx^*(\omega)\| \leq \lim_{s \to \infty} \limsup_{t \to \infty} \|\vecx_t(\omega)- \vec{q}_s(\omega)\|  + \|\vec{q}_s(\omega)-  \vecx^*(\omega)\| = 0. \nonumber
\end{equation}
Therefore, for any $\omega \in \Omega_d$, we have
$
    \lim_{t\to \infty} \vecx_t(\omega) = \vecx^*(\omega), \ \ 
$
where $\vecx^*(\omega) \in \mathcal{X}^*.$ So we have, ${\lim_{t\to \infty} \vecx_t= \vecx^*}$ a.s.
\end{proof}

\bibliographystyle{ieeetr}
\bibliography{bib}
\end{document}